\providecommand{\keywords}[1]{\textbf{\textit{Keywords---}} #1}
\newtheorem{theorem}{Theorem}
\newtheorem{lemma}[theorem]{Lemma}
\newtheorem{proposition}[theorem]{Proposition}
\newtheorem{corollary}[theorem]{Corollary}%[section]
\newtheorem*{conjecture}{Conjecture}
\theoremstyle{definition}
\newtheorem{definition}{Definition}
\g@addto@macro\th@remark{\thm@headpunct{:}}
\theoremstyle{remark}
\newtheorem*{remark}{Remark}
\newtheorem*{remarks}{Remarks}
\newcommand{\CFG}{\text{CFG}}
\begin{document}
\title{A maximizing characteristic for critical configurations of chip-firing
games on digraphs}

\author{
  Hoang Thach NGUYEN\footnote{Institute of Mathematics, 18 Hoang Quoc Viet
    street, Cau Giay district, Hanoi, Vietnam, Email:
    \textsf{nhthach@math.ac.vn}}, 
    \and Thi Thu Huong TRAN\footnote{Vietnamese-German University, Le Lai street,
      Hoa Phu ward, Thu Dau Mot City, Binh Duong, Vietnam, Email:
      \textsf{huong.ttt@vgu.edu.vn and ttthuong@math.ac.vn}}
    }

\maketitle

\begin{abstract}
Aval \emph{et al.} proved in \cite{ADDL16} that starting from a critical
configuration of a chip-firing game on an undirected graph, one can never
achieve a stable configuration by reverse firing any non-empty subsets of its
vertices. In this paper, we generalize the result to digraphs with a global sink
where reverse firing subsets of vertices is replaced with reverse firing
multi-subsets of vertices. Consequently, a combinatorial proof for the duality
between critical configurations and superstable configurations on digraphs is
given. Finally, by introducing the concept of energy vector assigned to each
configuration, we show that critical and superstable configurations are the
unique ones with the greatest and smallest (w.r.t. the containment order),
respectively, energy vectors in each of their equivalence classes.   
\end{abstract}

\keywords{critical configurations, superstable configurations, duality,
chip-firing games, energy vector optimizer.}

\bigskip
\section{Introduction}\label{sec:intro} 
Originally developped in the early nineties of the last centuries in the context
of self-organized criticality~\cite{Dha90} and as a ``balancing game'' on
graphs~\cite{BL92,BLS91}, chip-firing games (CFG) have become an
attractive mathematical model in combinatorics~\cite{Big99,Mer05,Mer97,PP16}.
In recent years, new connections have been found between CFG and the Riemann-Roch
theory on graphs~\cite{AB11,BN07} and potential theory on graphs~\cite{BS13,GK15}.
This manuscript contributes some new results to the latter category and at the
same time provides a combinatorial insight on existing results.

\smallskip

We consider a CFG on a digraph $G$ with a global sink. A configuration is a
distribution of chips on the non-sink vertices. A transition, called a firing,
consists in choosing a vertex and sending
one of its chips along each out-going arc to its neighbors. A firing is legal if
the chosen vertex has at least as many chips as its out-going arcs. Suppose we
restrict the game to non-negative configurations and legal firings. Then, a
stable configuration is a configuration in which all the non-sink vertices
cannot fire. A critical configuration is a stable configuration which is
``attainable'' from a configuration with many chips. The detailed descriptions
of the model will be given in Section \ref{sec:prelim}. Further discussions on
critical configurations can be found in \cite{HLMPPW08}. 

In order to investigate the critical configurations, we introduce the notion of
$G$-strongly positive scripts. The $G$-strongly positive scripts are closely
related to Speer's algorithm~\cite{Spe93} (indeed, the minimum $G$-strongly
positive script coincides with the output of Speer's algorithm) and are
equivalent to Perkinson {\em et al.}'s ``burning configurations''~\cite{PPW11}.
The key observation is that reverse-firing a $G$-strongly positive script
``increases'' the configurations. Thus, by repeatedly reverse-firing a
$G$-strongly positive script then stabilizing, we obtain an ``increasing''
sequence of stable configurations. Since there are only finitely many stable
configurations in each equivalence class, this process must eventually stop at
a fixed point, which is the critical configuration.

\smallskip

Our contribution in this paper is threefold. First, we show that for CFG on
digraphs with a global sink, among all the stable configurations of each
equivalence class, the critical configuration has the maximum weight
(Theorem \ref{T:max}). This has
been proved in \cite{PP16} for Eulerian digraphs, but the question remained open
for digraphs with a global sink. Secondly, we give an
extension of Aval {\em et al.}~\cite{ADDL16}'s result from undirected graphs to
digraphs with a global sink. Namely, one cannot obtain a stable configuration
from a critical one by reverse-firing a non-empty multi-subset of
vertices (Theorem \ref{T:script}). Using this result, we revisit the duality
between critical configurations and superstable configurations (Theorem
\ref{T:dua}). It should be noted that the duality is well-known in the
literature: the result for Eulerian digraphs was given by Holroyd \emph{et
al.}~\cite{HLMPPW08}, for strongly connected digraphs by Asadi and
Backman~\cite{AB11}, for digraphs with a global sink by Perkinson \emph{et
al.}~\cite{PPW11}. There are remarkable differences between the techniques used
in those proofs. For the case of digraphs with a global sink, Perkinson
{\em et al.} used advanced algebra techniques such as the coordinate ring and
Gr\"obner bases, while Asadi and Backman's proof is a combinatorial one. Our
proof is combinatorial and is independent from Asadi and Backman's proof. A
notable difference is that the attainability in their proof requires that the
firing/reverse-firing sequences to be legal, while the legality restriction is
not imposed in our proof. Lastly, we give an energy maximizing (resp.
minimizing) characteristic for critical (resp. superstable) configurations
(Theorem \ref{T:max2} and Corollary \ref{C:sma}). Unlike previous studies on CFG
and potential theory~\cite{BS13, GK15} where energies are defined as norms, in this
paper, the energies are defined as vectors and are compared using the containment
order. This order is not unfamiliar in the CFG literature: its restriction on
the set of accessible configurations (given an initial configuration) coincides
with the accessibility order, which has been extensively investigated in
connection with lattice structures~\cite{CPT13, LP01}.

\smallskip

The manuscript is organized as follows. Section \ref{sec:prelim} gives a brief
summary of relevant definitions and fundamental results on $\CFG$ and critical
configurations. Section \ref{sec:critical} presents some properties and the
maximizing characterization of critical configurations. Section
\ref{sec:duality} discusses the duality and the minimizing characterization of
superstable configurations.

\bigskip

\section{Preliminaries}\label{sec:prelim}
Unless stated otherwise, the vectors in this paper are row vectors. Let $a$ be a
vector in $\mathbb Z^n$. We denote by
$a_i$ the $i$-th component of $a$. We say that a vector is
non-negative (resp. positive) if all of its components
are non-negative (resp. positive). The zero vector is denoted by
$\pmb 0$ and the all-one vector by $\pmb 1$.

The {\em support} of $a$ is defined by $\mbox{supp}(a) = \{i\ \vert\ a_i
\neq 0\}$. The {\em weight} of $a$ is $w(a) = \sum_{i = 1}^n a_i$.

For a matrix $M$, let $M_i$ denote the $i$-th row vector of $M$ and $M_{ij}$
the entry at row $i$ and column $j$ of $M$.

The {\em containment order} on $\mathbb Z^n$ is defined by: $a \succeq b \iff
a_i \ge b_i$ for all $i=1,2,\cdots,n$.

\medskip

Let us now give some basic notions of chip-firing games on digraphs with a
global sink.

Let $G = (V, E)$ be a directed multigraph without loops, with $n + 1$ vertices,
{\em i.e.} $V = \{1, 2, \cdots, n + 1\}$. 
The \emph{out-going degree} (resp. \emph{in-going
degree}) of a vertex $i$ is denoted by $d^+_i$ (resp. $d^-_i$). The number of
edges going from $i$ to $j$ is denoted by $e_{i, j}$.

A {\em source component} (resp. {\em sink component}) of $G$ is a strongly
connected component without in-going (resp. out-going) edge from other strongly
connected components.
A vertex $s$ of $V$ is a {\em sink} if $\{s\}$ is a sink component. It is a {\em global
sink} if it is a sink and for every other vertex $i$, there exists a
directed path from $i$ to $s$. Note that a global sink is unique if it exists.
In the rest of the paper, we suppose that $G$ has a global sink at the vertex
$n + 1$.

\smallskip

The graph $G$ can be represented by its {\em Laplacian matrix} $\widetilde \Delta \in
\mathbb Z^{(n + 1) \times (n + 1)}$ whose entries are defined as follows:
\begin{equation*}
  \widetilde{\Delta}_{ij} =
  \begin{cases}
    d^+_i \quad & \text { if } i = j\,,\\
    -e_{i,j} \quad & \text{ if } i \ne j\,.
  \end{cases}
\end{equation*}
The {\em reduced
Laplacian matrix} $\Delta$ is the matrix obtained from $\widetilde \Delta$ by removing the row and the
column corresponding to the sink. 

The Laplacian and reduced Laplacian matrices play an important role in the study
of combinatorial properties of graphs and of chip-firing games on graphs
(defined hereafter) as well, see 
\cite{BL92, BLS91} and the references therein. We recall here two useful
properties for the Laplacian and the reduced Laplacian of digraphs with a global sink.

\begin{enumerate}[label=\roman*)]
  \item The kernel of $\widetilde{\Delta}$ is spanned by a non-negative vector
    $v$ such that $v_{n + 1} > 0$. Consequently, the rank of
    $\widetilde{\Delta}$ is $n$.
  \item The matrix $\Delta$ is non-singular. Moreover, all the entries of
    $\Delta^{-1}$ are
    non-negative.
\end{enumerate}

The first property is a particular case of \cite[Proposition 3.1]{BL92}. The
non-singularity is a corollary of the first property. The non-negativity of
$\Delta^{-1}$ is a property of non-singular M-matrix (see, for instance,
\cite{plemmons77, PS04}).

\medskip

A {\em chip-firing game} ($\CFG$) on $G$, denoted by $\CFG(G)$, is a discrete dynamical system in which:
\begin{itemize}
  \item[-] A {\em chip configuration} ({\em configuration} for short) is a vector
    in $\mathbb Z^n$ whose coordinates represent the
    numbers of chips at non-sink vertices \footnote{Typically, the
      configurations must be
      non-negative. In this paper, we allow the coordinates of a configuration
    to be negative.}. We will use bold letters such as
    $\pmb a$  to denote configuration vectors.
  \item[-] The transitions occur according to the {\em firing rule}, defined as
    follows. A vertex is {\em active} if it has at least as many chips as its
    out-going degree. An active vertex can {\em fire} by sending one chip along
    each of its out-going edge to its neighbors. That is, if $\pmb b$ is the configuration
    obtained from $\pmb a$ by firing an active vertex $i$, we write $\pmb a
    \xrightarrow{i} \pmb b$, then:
    \[
      b_j = 
      \begin{cases}
        a_i - d^+_i & \mbox{ if } j = i\,,\\
        a_j + e_{i,j} & \mbox{ otherwise,}
      \end{cases}
    \]
    or equivalently:
    \[
      \pmb b = \pmb a - {\Delta}_i\,.
    \]
\end{itemize}

\smallskip

A (unconstrained) {\em firing sequence}  is a sequence of non-sink vertices $(s_1, s_2, \cdots, s_k)$ in $V$. The firing sequence $(s_1, s_2, \cdots, s_k)$ is called 
{\em legal} from $\pmb a$ if there exists a sequence of configurations $\pmb a = \pmb a_0, \pmb a_1, \cdots, \pmb
a_k$ such that $\pmb a_{r - 1} \xrightarrow{s_{r}} \pmb a_{r}$ for all $1 \le r
\le k$ and $s_r$ is active in $\pmb a_{r - 1}$.

The {\em firing script} of a legal firing sequence
$(s_1, s_2, \cdots, s_k)$ is a vector $\sigma \in \mathbb N^n$, where $\sigma_i$
is the number of occurrences of the vertex $i$ in the sequence $(s_1,s_2,\cdots,s_k)$.
The firing script provides a direct way to compute the result of a firing
sequence: if $\pmb b$ is obtained from
$\pmb a$ by firing sequentially $s_1, s_2, \cdots, s_k$ and $\sigma$ is the
corresponding firing script,
then 
\begin{equation}
  \pmb b = \pmb a - \sigma \Delta\,.
  \label{eq:firingequation}
\end{equation}
When referring to vectors in $\mathbb N^n$ without mentioning a firing sequence,
we will use the term {\em $n$-scripts}, or simply {\em scripts}.

\medskip

We say that two configurations $\pmb a$ and $\pmb b$ are {\em linearly
equivalent}, denoted by $\pmb a \sim \pmb b$, if there exists a vector
$\sigma \in \mathbb Z^n$ verifying
(\ref{eq:firingequation}). The linear equivalence is an equivalent relation on
$\mathbb Z^{n}$. The quotient group $\mathbb
Z^n/\langle\Delta_1,\dots,\Delta_n\rangle$ is called the {\em Sandpile group} of
$G$ and is denoted by $\mathcal{SP}(G)$. When the context is clear, we refer to elements of $\mathcal{SP}(G)$ simply as equivalent classes.

\smallskip

Note that $\pmb a \sim \pmb b$ does not imply that $\pmb b$ can be obtained from
$\pmb a$ by a legal firing sequence. In fact, even if 
$\pmb b = \pmb a - \sigma \Delta$ 
for some non-negative $\sigma$, it is not always the case that there
exists a legal firing sequence leading from $\pmb a$ to $\pmb b$.

\medskip

A configuration is said to be {\em stable} if it does not have any active
vertex. It is known that for each $\CFG$ with a global sink and for any non-negative
configuration $\pmb a$, there exists a unique stable configuration, denoted by
$\pmb a^\circ$, which can be obtained from $\pmb a$ by a legal firing sequence.
A process of firing 
from $\pmb a$ to $\pmb a^\circ$ is called a {\em
stabilization} of $\pmb a$.
Note that there may be many legal firing sequences from $\pmb a$ to $\pmb a^\circ$, but they
all have the same firing script. In fact, they are the longest legal firing
sequences from $\pmb a$. It is straightforward that if $\pmb a$ and $\pmb b$ are non-negative
configurations, then
\[
  (\pmb a+\pmb b)^\circ = (\pmb a+\pmb b^\circ)^\circ =(\pmb  a^\circ+\pmb
  b^\circ)^\circ\,.
\]
Moreover, if $\pmb b$ is obtained from $\pmb a$ by a legal firing sequence with firing script $\sigma$, then $$(\pmb a-\sigma\Delta)^\circ =\pmb b^\circ.$$
We refer the interested reader to
\cite{BLS91,HLMPPW08,LP01} for more details on these results.

\medskip

We end this section with some important properties of critical
configurations, also known as recurrent configurations. There are several definitions of critical configurations in the literature. In \cite{HLMPPW08}, it is shown that all those definitions are equivalent for
$\CFG$ on a digraph with a global sink. We recall here the definition that we
consider most useful for our purposes in this paper.

Let $\pmb c_{max} = (d_1^+-1, d_2^+-1,\cdots, d_n^+-1)$ be the maximum stable
configuration by the containment order. 
\begin{definition}[Critical configuration]\mbox{}

  A non-negative configuration $\pmb a$ is critical if and only if for
  any configuration $\pmb b$, there exists a non-negative configuration
  $\pmb c$ such that $\pmb a = (\pmb b + \pmb c)^\circ$.
\end{definition}
\smallskip

The following lemma presents some known properties of critical configurations.

\begin{lemma}[\cite{HLMPPW08}]
  Consider a $\CFG$ on a graph $G$ with a global sink.
  \begin{enumerate}[label=\roman*)]
    \item Each equivalent class of $\mathcal{SP}(G)$ has exactly one critical
      configuration.
    \item If $\pmb a$ is critical, $\pmb b$ is stable, and $\pmb b \succeq \pmb
      a$, then $\pmb b$ is critical.
    \item A non-negative configuration $\pmb a$ is {\em critical} if it is stable and
      there exists a non-negative configuration $\pmb c$ such that
      $\pmb a = (\pmb c_{max} + \pmb c)^\circ$.
  \end{enumerate}
  \label{lem:critproperties}
\end{lemma}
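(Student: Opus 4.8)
The plan is to prove the three statements in the order (ii), (iii), (i), using throughout the abelian identities $(\pmb x+\pmb y)^\circ=(\pmb x^\circ+\pmb y^\circ)^\circ=(\pmb x^\circ+\pmb y)^\circ$, the finiteness of the set $M$ of stable configurations (every $\pmb s\in M$ satisfies $\pmb 0\preceq\pmb s\preceq\pmb c_{max}$), and the non-negativity of $\Delta^{-1}$. I first record a reduction: since $(\pmb b+\pmb c)^\circ=(\pmb b^\circ+\pmb c^\circ)^\circ$ and any configuration becomes non-negative after adding enough chips, a non-negative configuration $\pmb a$ is critical if and only if \emph{for every stable $\pmb b$ there is a non-negative $\pmb c$ with $(\pmb b+\pmb c)^\circ=\pmb a$}; moreover one may take $\pmb c$ stable, and since $(\pmb b+\pmb c)^\circ\sim\pmb b+\pmb c$ this $\pmb c$ then lies in the class $[\pmb a-\pmb b]$. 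For (ii), let $\pmb a$ be critical, $\pmb b$ stable with $\pmb b\succeq\pmb a$, and $\pmb x$ arbitrary; criticality gives $\pmb c\succeq\pmb0$ with $(\pmb x+\pmb c)^\circ=\pmb a$, and setting $\pmb d=\pmb c+(\pmb b-\pmb a)\succeq\pmb0$ one computes $(\pmb x+\pmb d)^\circ=\big((\pmb x+\pmb c)^\circ+(\pmb b-\pmb a)\big)^\circ=(\pmb a+(\pmb b-\pmb a))^\circ=\pmb b^\circ=\pmb b$, so $\pmb b$ is critical.

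For (iii), the forward implication is immediate: a critical $\pmb a$ is the output of a stabilization, hence stable, and taking $\pmb b=\pmb c_{max}$ in the definition yields $\pmb c\succeq\pmb0$ with $\pmb a=(\pmb c_{max}+\pmb c)^\circ$. The converse is where $\pmb c_{max}$ does the work: suppose $\pmb a$ is stable and $\pmb a=(\pmb c_{max}+\pmb c_0)^\circ$ with $\pmb c_0\succeq\pmb0$. For any stable $\pmb b$ we have $\pmb c_{max}\succeq\pmb b$ because $\pmb c_{max}$ is the containment-maximum stable configuration, so $\pmb d:=\pmb c_0+(\pmb c_{max}-\pmb b)\succeq\pmb0$ and $\pmb b+\pmb d=\pmb c_{max}+\pmb c_0$, whence $(\pmb b+\pmb d)^\circ=(\pmb c_{max}+\pmb c_0)^\circ=\pmb a$; by the reduction this is exactly criticality. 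The same computation settles \emph{existence} in (i): given a class, choose (using that the full-rank lattice $\mathbb Z^n\Delta$ meets the non-negative orthant in every coset, a routine consequence of $\det\Delta\ne0$ that is constructible from $\Delta^{-1}\succeq\pmb0$) a non-negative $\pmb c_0$ with $\pmb c_0\sim\pmb x-\pmb c_{max}$; then $\pmb a:=(\pmb c_{max}+\pmb c_0)^\circ$ is stable, lies in the class $[\pmb x]$, and is critical by (iii).

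The remaining and genuinely substantial point is \emph{uniqueness} in (i). I plan to obtain it from the finite commutative monoid $(M,\ast)$ with $\pmb s\ast\pmb t=(\pmb s+\pmb t)^\circ$ and identity $\pmb 0$ (associativity and commutativity being the abelian identities). By the reduction, the critical configurations are precisely the elements lying in $\pmb b\ast M$ for every $\pmb b\in M$, i.e. in the intersection of all principal ideals; in a finite commutative monoid this intersection is the unique minimal ideal $K$, which is a group with idempotent identity $e$ and satisfies $K=M\ast e$. The class map $\pi\colon M\to\mathcal{SP}(G)$, $\pmb s\mapsto[\pmb s]$, is a surjective monoid homomorphism whose restriction $\pi|_K\colon K\to\mathcal{SP}(G)$ is a surjective group homomorphism, and uniqueness in (i) is exactly the injectivity of $\pi|_K$, equivalently $|K|=|\mathcal{SP}(G)|=\det\Delta$.

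This injectivity is the main obstacle: it does not follow formally from the monoid structure and must be pinned down either by the independent enumeration $|K|=\det\Delta$ (via Dhar's burning criterion or a matrix--tree count) matched against $|\mathcal{SP}(G)|=\det\Delta$, or by a direct argument that two linearly equivalent critical configurations coincide. For the latter I would aim to prove the \emph{invariance} statement that $(\pmb a+\pmb z)^\circ=\pmb a$ whenever $\pmb a$ is critical and $\pmb z\succeq\pmb0$, $\pmb z\sim\pmb0$: granting it, if $\pmb a\sim\pmb a'$ are both critical, then criticality of $\pmb a$ gives $\pmb c\succeq\pmb0$ (necessarily $\pmb c\sim\pmb 0$) with $(\pmb a'+\pmb c)^\circ=\pmb a$, while invariance applied to $\pmb a'$ gives $(\pmb a'+\pmb c)^\circ=\pmb a'$, forcing $\pmb a=\pmb a'$. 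Establishing invariance (equivalently, the injectivity of $\pi|_K$) is the crux of the whole lemma; the reductions above reduce everything else to short computations with the abelian identities and the maximality of $\pmb c_{max}$.
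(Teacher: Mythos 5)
The paper itself offers no proof of this lemma---it is quoted from \cite{HLMPPW08} as known background---so your proposal can only be judged on its own terms, and on those terms it is incomplete. The parts you do prove are correct and cleanly argued: the reduction of criticality to a quantifier over \emph{stable} $\pmb b$ via the abelian identities, the computation $(\pmb x+\pmb d)^\circ=\pmb b$ establishing (ii), the use of the containment-maximality of $\pmb c_{max}$ for the converse in (iii), and the existence half of (i) (every class contains a non-negative configuration because there is a script $\sigma$ with $\sigma\Delta$ entrywise positive, e.g.\ $\sigma=u\Delta^{-1}$ for suitable positive integer $u$, using $\Delta^{-1}\succeq\pmb 0$). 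Your structural framing of uniqueness is also correct: under the reduction, the critical configurations are exactly the intersection of the principal ideals of the finite commutative monoid $(M,\ast)$, i.e.\ its kernel $K$, which is a group, and uniqueness in (i) is precisely injectivity of the class map restricted to $K$.

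But that injectivity is exactly where you stop: you name two possible strategies (matching $|K|=\det\Delta$ against $|\mathcal{SP}(G)|=\det\Delta$, or the invariance statement that $(\pmb a+\pmb z)^\circ=\pmb a$ whenever $\pmb a$ is critical, $\pmb z\succeq\pmb 0$ and $\pmb z\sim\pmb 0$) and carry out neither. That is a genuine gap, and by your own account it is the crux of the whole lemma; parts (ii), (iii) and existence are the easy formal consequences, while uniqueness carries the real content. The invariance route can in fact be completed, but it needs an idea absent from your proposal, namely the script-comparison argument the paper develops later as Lemma \ref{L:prec}: since $\Delta^{-1}\succeq\pmb 0$, the hypotheses $\pmb z\sim\pmb 0$, $\pmb z\succeq\pmb 0$ force $\pmb z=\sigma\Delta$ with $\sigma$ a non-negative integer script; the ``first excess firing'' argument shows the stabilizing script $\tau$ of $\pmb a+\sigma\Delta$ satisfies $\tau\preceq\sigma$ for stable $\pmb a$; this yields (as in Lemma \ref{L:gre}) that stabilization after adding $\sigma\Delta$ cannot decrease weight, whence $(\pmb c_{max}+\sigma\Delta)^\circ=\pmb c_{max}$ (it is stable, hence componentwise $\preceq\pmb c_{max}$, yet of weight at least $w(\pmb c_{max})$); finally, writing a critical $\pmb a$ as $(\pmb c_{max}+\pmb c)^\circ$ (which you have from the definition) the abelian identities give
\begin{equation*}
(\pmb a+\sigma\Delta)^\circ=(\pmb c_{max}+\pmb c+\sigma\Delta)^\circ=\bigl((\pmb c_{max}+\sigma\Delta)^\circ+\pmb c\bigr)^\circ=(\pmb c_{max}+\pmb c)^\circ=\pmb a\,,
\end{equation*}
after which your two-line deduction that linearly equivalent criticals coincide goes through. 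Without some such argument, part (i) of the lemma remains unproven.
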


It is difficult to decide if a configuration is critical or not using the above
definitions. However, there exist efficient algorithms to recognize
critical configurations, for instance Dhar's \emph{burning algorithm} for
undirected graphs~\cite{Dha90} and Speer's \emph{script algorithm} for connected
digraphs~\cite{Spe93}. In the next section, we give a maximizing
characterization for the critical configurations on their equivalent classes.

\section{Critical configurations as energy vector maximizers}\label{sec:critical}
In this section, we prove that we never reach a stable configuration from a
critical one by reverse-firing unconstrainedly any non-empty multi-subsets of vertices (Theorem
\ref{T:script}). This is an analogue of Aval \emph{et al.}'s
result~\cite{ADDL16} in the case of digraphs with a global sink.
Then, by associating each configuration with an energy vector, we will show that
among stable configurations in a same equivalence class, the critical
configuration is the unique one with the greatest energy vector by the
containment order (Theorem \ref{T:max2}). An affirmative answer for a question
in \cite{PP16} about the maximum weight of critical configurations is also given
using $G$-strongly positive scripts.

Before presenting these results, we first introduce the concepts of $G$-positive
scripts and $G$-strongly positive scripts.

\medskip

\begin{definition}[$G$-positive and $G$-strongly positive
  scripts]\mbox{}

  Let $\sigma$ be an $n$-script. We say that:
  \begin{enumerate}[label=\roman*)]
    \item $\sigma$ is \emph{$G$-positive} if $\sigma\Delta \succ \pmb 0$;
    \item $\sigma$ is \emph{$G$-strongly positive} if it is $G$-positive and
      $supp(\sigma\Delta)$ intersects each source component of $G$.
  \end{enumerate}
\end{definition}

\begin{remarks}\mbox{}
  \begin{itemize}
    \item[-] In case that $G\setminus \{n+1\}$ is strongly connected, the
      concepts of $G$-positiveness and $G$-strong positiveness coincide. 
    \item[-] A $G$-strongly positive script is $G$-positive but the converse is
      not true. For instance, consider the graph in Figure \ref{F:critdigraph}.
      The scripts $(1,2,4)$ and $(0,0,1)$ both are $G$-positive but the former
      is $G$-strongly positive while the latter is not.
    \item[-] $G$-positive and $G$-strongly positive scripts always exist.
      Indeed, since the entries of $\Delta^{-1}$ are positive rational numbers,
      one can find a positive integer vector $u$ such that $u \Delta^{-1}$ is
      also a positive integer vector. Then $\sigma = u
      \Delta^{-1}$ is a $G$-strongly positive script.
    \item[-] Let $\sigma$ be a $G$-strongly positive script. The support of
      $\sigma\Delta$ may not intersect all strongly connected components of $G$. For example, 
      consider the graph in Figure \ref{F:critdigraph}. The script
      $\sigma = (1, 2, 3)$ is $G$-strongly positive and $(\sigma
      \Delta)(v_3) = 0$.
  \end{itemize}
\end{remarks}
\begin{figure}[hbt]
\centering
\includegraphics[scale=0.8]{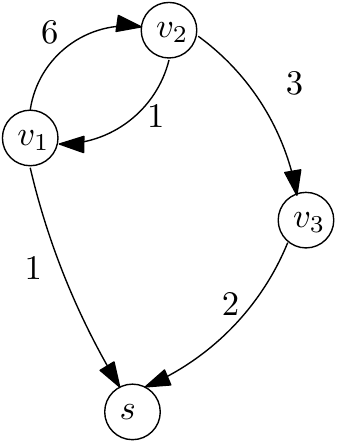}
\caption{A graph with global sink $s$ \label{F:critdigraph}}
\end{figure}

\medskip

The following lemma is a crucial result which shows that for stable configurations, legal firing consumes less time than reverse firing. 

\begin{lemma}\label{L:prec}
Let $\sigma=(\sigma_1,\dots,\sigma_n)$ be an $n$-script and let $\pmb a$ be a stable
configuration of $\CFG(G)$ such that $\pmb a +\sigma \Delta$ is non-negative. Let $\tau$ be the firing script in the stabilizing process of $\pmb a+\sigma\Delta$. 
Then $\tau\preceq \sigma$.
\end{lemma}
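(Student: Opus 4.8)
The plan is to recognize this lemma as an instance of a least-action-type monotonicity for abelian chip-firing. The crucial first observation is that, since $\pmb a$ is stable and we write $\pmb b := \pmb a + \sigma\Delta$, firing $\pmb b$ with the full script $\sigma$ produces $\pmb b - \sigma\Delta = \pmb a$, which is already stable. Thus $\sigma$ is a (possibly non-legal) \emph{stabilizing script} for $\pmb b$, whereas $\tau$ is the script of an \emph{actual legal} stabilization of $\pmb b$, well-defined and independent of the chosen legal sequence by the abelian property recalled in Section~\ref{sec:prelim}. The content of the lemma is therefore that the legal stabilization script $\tau$ is dominated coordinatewise by the a priori stabilizing script $\sigma$.

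First I would fix a legal firing sequence $(s_1,\dots,s_k)$ realizing the stabilization of $\pmb b$ (which exists because $\pmb b$ is non-negative by hypothesis), and let $\tau^{(r)}$ denote the partial script after the first $r$ firings, so that $\tau^{(0)}=\pmb 0$ and $\tau^{(k)}=\tau$. I would then prove by induction on $r$ that $\tau^{(r)}\preceq\sigma$. The base case $\tau^{(0)}=\pmb 0\preceq\sigma$ is immediate. For the inductive step, assume $\tau^{(r)}\preceq\sigma$ and suppose we are about to legally fire vertex $j:=s_{r+1}$; since $\tau^{(r+1)}$ differs from $\tau^{(r)}$ only in coordinate $j$, it suffices to show that $\tau^{(r)}_j<\sigma_j$.

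The heart of the argument, and the step I expect to be the main obstacle, is a sign analysis of the Laplacian that rules out $\tau^{(r)}_j=\sigma_j$. Arguing by contradiction, set $\delta:=\sigma-\tau^{(r)}$, which is non-negative by the inductive hypothesis and satisfies $\delta_j=0$ under the contradiction assumption. Writing $\pmb c:=\pmb b-\tau^{(r)}\Delta$ for the current configuration, so that $\pmb c-\pmb a=(\sigma-\tau^{(r)})\Delta$, one computes
\[
  c_j - a_j \;=\; \bigl[\delta\Delta\bigr]_j \;=\; \delta_j\, d^+_j \;+\; \sum_{i\ne j}\delta_i\,(-e_{i,j}).
\]
Because $\delta_j=0$ and each remaining summand has $\delta_i\ge 0$ and $-e_{i,j}\le 0$, this forces $c_j\le a_j$. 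But $j$ is active in $\pmb c$, so $c_j\ge d^+_j$, while $\pmb a$ is stable, so $a_j\le d^+_j-1<d^+_j$; together these give $c_j>a_j$, a contradiction. Hence $\tau^{(r)}_j<\sigma_j$, which completes the induction and yields $\tau=\tau^{(k)}\preceq\sigma$. The only point requiring care is the consistent use of the sign pattern of $\Delta$ (positive diagonal $d^+_j$, non-positive off-diagonal $-e_{i,j}$) together with the non-negativity of $\delta$ propagated through the induction; everything else is bookkeeping.
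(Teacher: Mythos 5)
Your proof is correct and is essentially the same argument as the paper's: both locate the first moment a vertex $j$ would fire more than $\sigma_j$ times (you phrase this as an induction on the prefix, the paper as a minimal counterexample at time $t_0$), and both derive a contradiction from the sign pattern of $\Delta$ together with the stability bound $a_j < d^+_j$ --- your inequality $c_j \le a_j < d^+_j$ violating activity is just the paper's ``negative chip count after firing'' inequality shifted by $\Delta_{jj}$.
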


\begin{proof}
Let $\pmb b=\pmb a+\sigma\Delta$. Since $\pmb a$ is stable, we have, for all
$i=1,2\dots,n$:
\begin{equation}
  0\leq a_i=b_i-\sigma_i\Delta_{ii}+\sum_{j \neq i}\sigma_j\Delta_{ji}<
  \Delta_{ii}\,. \label{E:1}
\end{equation} 

Let $s=(s_1,s_2,\dots,s_k)$ be a legal firing sequence in the stabilizing process
of $\pmb b$. Note that some of $s_i$ could appear more than once. Denote by
$s^{\leq t}=(s_1, s_2,\dots,s_t)$ the legal firing sequence which is the
prefix of $s$ of length $t$ and $\tau^{\leq t}$ its associated firing script,
{\em i.e.} $\tau_j^{\le t}$ is the number of firings of the vertex $j$ until time
$t$.

Assume to the contrary that $\tau \npreceq \sigma$. Let $t_0$ be the first time
in the process of firing $s$ that the number of occurrences of the vertex
$i = s_{t_0}$ exceeds $\sigma_{s_{t_0}}$. We have:
\begin{equation*}
  \begin{cases}
    \tau^{\leq t_0}_j=\sigma_j+1 &\text{ if } j=i,\\
    \tau^{\leq t_0}_j\leq \sigma_j &\text{ if } j\ne i.
  \end{cases}
\end{equation*} 
Combining this with the fact that $\Delta_{ij} \le 0$ for all $j \neq i$ and
(\ref{E:1}), we have the following 
evaluation for the number of chips at vertex $i$ after the
firing of $s^{\le t_0}$:
\begin{align*}
  &b_i-\tau^{\leq t_0}_t\Delta_{ii}-\sum_{j\ne i} \tau^{\leq t_0}_j
  \Delta_{ji}\leq b_i-(\sigma_i+1)\Delta_{ii}-\sum_{j\ne i}
  \sigma_j\Delta_{ji} < 0\,,
\end{align*} 
which contradicts the legality of $s$.
\end{proof}

Note that the proof of Lemma \ref{L:prec} does not require $\pmb a+\sigma\Delta$
to be non-negative. We keep this hypothesis, however, for ease of exposition and
in fact, it does not affect our other proofs.

The next lemma states that reverse-firing a $G$-positive script followed by the 
stabilization does not decrease the weight of a non-negative configuration.

\begin{lemma}\label{L:gre} 
  Let $\sigma$ be a $G$-positive script and let $\pmb a,\pmb b$ be stable 
  configurations such that $\pmb b=(\pmb a+\sigma\Delta)^\circ$. Then $w(\pmb
  b)\geq w(\pmb a)$.
\end{lemma}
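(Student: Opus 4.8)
The plan is to reduce the weight inequality to the non-negativity of the row sums of the reduced Laplacian, using Lemma \ref{L:prec} to compare the stabilizing script of $\pmb a + \sigma\Delta$ with $\sigma$ itself.

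First I would put $\pmb c = \pmb a + \sigma\Delta$. Since $\sigma$ is $G$-positive, $\sigma\Delta \succ \pmb 0$, so $\pmb c \succ \pmb a$; moreover $\pmb c$ is non-negative, which is implicit in the hypothesis that $\pmb b = \pmb c^\circ$ is defined. Let $\tau$ be the firing script of a stabilization of $\pmb c$, so that
\[
\pmb b = \pmb c - \tau\Delta = \pmb a + (\sigma - \tau)\Delta.
\]
Because $\pmb a$ is stable and $\pmb c = \pmb a + \sigma\Delta$ is non-negative, Lemma \ref{L:prec} yields $\tau \preceq \sigma$. Setting $\rho = \sigma - \tau$, I thus obtain $\rho \succeq \pmb 0$ together with $\pmb b - \pmb a = \rho\Delta$.

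The second step is a direct weight computation:
\[
w(\pmb b) - w(\pmb a) = w(\rho\Delta) = \sum_{i=1}^n (\rho\Delta)_i = \sum_{k=1}^n \rho_k \left( \sum_{i=1}^n \Delta_{ki} \right).
\]
Everything now hinges on the $k$-th row sum of $\Delta$. I would observe that each row of the full Laplacian $\widetilde\Delta$ sums to zero, because $\widetilde\Delta_{kk} = d_k^+ = \sum_{j \ne k} e_{k,j}$ while $\widetilde\Delta_{kj} = -e_{k,j}$ for $j \ne k$ (recall $G$ has no loops). Deleting the sink column to form $\Delta$ therefore leaves exactly the discarded sink contribution behind:
\[
\sum_{i=1}^n \Delta_{ki} = -\,\widetilde\Delta_{k,n+1} = e_{k,n+1} \ge 0.
\]

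Combining the two displays gives $w(\pmb b) - w(\pmb a) = \sum_{k=1}^n \rho_k\, e_{k,n+1} \ge 0$, since $\rho \succeq \pmb 0$ and each $e_{k,n+1} \ge 0$, which is the desired conclusion. I expect the only genuine obstacle to be the identification of the row sum $\sum_{i=1}^n \Delta_{ki}$ with $e_{k,n+1}$, the number of arcs from $k$ to the global sink; once this is in hand, the statement reduces to Lemma \ref{L:prec} together with an arithmetic identity. It is worth noting that this argument invokes $G$-positivity only to ensure that $\pmb c$ is non-negative, so that the stabilization and Lemma \ref{L:prec} apply.
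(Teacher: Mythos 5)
Your proof is correct and follows essentially the same route as the paper's: both use Lemma \ref{L:prec} to get $\tau \preceq \sigma$ for the stabilizing script $\tau$, then express $w(\pmb b) - w(\pmb a)$ as $(\sigma-\tau)\Delta \cdot \pmb 1 = \sum_k (\sigma_k - \tau_k)\, e_{k,n+1} \ge 0$ via the row-sum identity for the reduced Laplacian. The only difference is that you spell out explicitly why the $k$-th row of $\Delta$ sums to $e_{k,n+1}$, which the paper leaves implicit.
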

\begin{proof}
  Let $\tau$ be the firing script of the stabilization of
  $\pmb a+\sigma\Delta$. We have $\pmb b=\pmb a+\sigma\Delta-\tau\Delta$, and:
  \begin{align*}
    w(\pmb b)=\pmb b\cdot \pmb 1&=\pmb a \cdot \pmb 1+ (\sigma-\tau)\Delta \cdot \pmb 1\\
    & =w(\pmb a)+ \sum_{i: (i,n+1)\in E} (\sigma_i-\tau_i) e_{i,n+1}.
  \end{align*}
  By Lemma \ref{L:prec}, $\sigma_{i}\geq \tau_{i}$ for all $i$. Hence, $w(\pmb b)\geq
  w(\pmb a)$.
\end{proof}

The next proposition shows the recurring nature of critical configurations. This
result was mentioned in \cite[Theorem 2.27]{PPW11}, but we have not found a proof
in the literature. We restate it here with a complete proof.

\begin{proposition}\label{P:cri}
Let $\pmb a$ be a stable configuration and let $\sigma$ be a $G$-strongly
positive script. Then $\pmb a$ is critical if and only if $(\pmb
a+\sigma\Delta)^\circ=\pmb a$.
Moreover, if $\pmb a$ is critical then the firing script in the stabilizing
process of $\pmb a+\sigma\Delta$ is $\sigma$.
\end{proposition}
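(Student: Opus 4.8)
The plan is to treat the two implications separately, since one is short and the other carries all the difficulty. Throughout, write $\Phi(\pmb x)=(\pmb x+\sigma\Delta)^\circ$; note that $\sigma\Delta\succeq\pmb 0$ because $\sigma$ is $G$-positive, so $\pmb x+\sigma\Delta$ is non-negative and $\Phi$ is well defined on stable configurations. For the forward implication I assume $\pmb a$ critical and show that $\Phi(\pmb a)$ is \emph{again} critical, after which uniqueness closes the argument. Fix an arbitrary configuration $\pmb b$. Criticality of $\pmb a$ gives a non-negative $\pmb c_0$ with $\pmb a=(\pmb b+\pmb c_0)^\circ$ (so $\pmb b+\pmb c_0\succeq\pmb 0$). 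Using the stabilization identities in the form $(\pmb x^\circ+\pmb y)^\circ=(\pmb x+\pmb y)^\circ$ for non-negative $\pmb x,\pmb y$ (which follows from $(\pmb x+\pmb y)^\circ=(\pmb x^\circ+\pmb y^\circ)^\circ$) with $\pmb x=\pmb b+\pmb c_0$ and $\pmb y=\sigma\Delta$, I obtain $\Phi(\pmb a)=((\pmb b+\pmb c_0)^\circ+\sigma\Delta)^\circ=(\pmb b+(\pmb c_0+\sigma\Delta))^\circ$, where $\pmb c_0+\sigma\Delta\succeq\pmb 0$. As $\pmb b$ was arbitrary, $\Phi(\pmb a)$ is critical; since $\Phi(\pmb a)\sim\pmb a$, Lemma \ref{lem:critproperties}(i) forces $\Phi(\pmb a)=\pmb a$, i.e.\ $(\pmb a+\sigma\Delta)^\circ=\pmb a$. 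The ``moreover'' clause is then immediate: if $\tau$ is the stabilizing script, $\pmb a=\pmb a+\sigma\Delta-\tau\Delta$ gives $(\sigma-\tau)\Delta=\pmb 0$, and non-singularity of $\Delta$ yields $\tau=\sigma$.

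For the converse, assume $\pmb a$ is stable with $(\pmb a+\sigma\Delta)^\circ=\pmb a$. Iterating the identity gives $(\pmb a+k\sigma\Delta)^\circ=\pmb a$ for every $k\ge 1$, and, exactly as above, the stabilizing script of $\pmb a+k\sigma\Delta$ is $k\sigma$. The key structural input I extract from $G$-\emph{strong} positivity is that $\sigma\succ\pmb 0$, i.e.\ every coordinate is strictly positive: if $Z=\{j:\sigma_j=0\}$ were non-empty, then for each $j\in Z$, $G$-positivity forces $(\sigma\Delta)_j=-\sum_{i\neq j}\sigma_i e_{i,j}=0$ and hence $e_{i,j}=0$ for every $i\in\operatorname{supp}(\sigma)$; since the sink has no out-arcs, $Z$ would receive no arc from outside $Z$, so a source strongly connected component $W$ of the subdigraph induced on $Z$ would be a source component of $G$ disjoint from $\operatorname{supp}(\sigma\Delta)$, contradicting $G$-strong positivity. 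Thus every vertex fires at least once (in fact $k\sigma_j\ge k$ times) while stabilizing $\pmb a+k\sigma\Delta$.

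To finish the converse I will invoke Lemma \ref{lem:critproperties}(iii): it suffices to produce, for some large $k$, a configuration $\pmb w\succeq\pmb c_{max}$ reachable from $\pmb a+k\sigma\Delta$ by a legal firing sequence. Indeed, such a $\pmb w$ satisfies $\pmb w^\circ=(\pmb a+k\sigma\Delta)^\circ=\pmb a$, so with $\pmb c:=\pmb w-\pmb c_{max}\succeq\pmb 0$ we get $\pmb a=(\pmb c_{max}+\pmb c)^\circ$ and $\pmb a$ is critical. The intuition for why $\pmb w$ exists is that, for large $k$, each vertex $j\notin\operatorname{supp}(\sigma\Delta)$ has zero net gain yet passes $k\sigma_j d_j^+$ chips through itself, so it is ``loaded'' (holds at least $d_j^+$ chips) at many intermediate moments of the relaxation, while vertices in $\operatorname{supp}(\sigma\Delta)$ start loaded.

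The main obstacle is precisely this last step: turning ``every vertex is loaded at some moment'' into ``all vertices are loaded at \emph{one} moment''. One cannot take $\pmb w=\pmb a+k\sigma\Delta$ outright, since at a vertex $i\notin\operatorname{supp}(\sigma\Delta)$ one has $(\pmb a+k\sigma\Delta)_i=a_i$, which may be strictly below $(\pmb c_{max})_i=d_i^+-1$ (this genuinely occurs at fixed points, e.g.\ when a source pumps into a vertex $i$ whose out-arcs all lead to the sink with $d_i^+\ge 2$ and $a_i=0$), so a naive domination fails and a partial stabilization is unavoidable. Exploiting the confluence (abelian property) of stabilization together with $\sigma\succ\pmb 0$ and the fact that $\operatorname{supp}(\sigma\Delta)$ meets every source component -- whence injected chips propagate, along the condensation order, to every vertex -- I expect to choose a firing order and a stopping time yielding a simultaneously loaded $\pmb w\succeq\pmb c_{max}$. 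Pinning down this order and stopping time, rather than any algebraic manipulation, is where the real work of the converse lies.
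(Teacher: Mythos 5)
Your forward direction and the ``moreover'' clause are correct. They form a mild variant of the paper's argument: where you verify directly from the definition that $(\pmb a+\sigma\Delta)^\circ$ is again critical and then invoke uniqueness of the critical configuration in its equivalence class (Lemma \ref{lem:critproperties}(i)), the paper first proves $(\pmb c_{max}+\sigma\Delta)^\circ=\pmb c_{max}$ via the weight inequality of Lemma \ref{L:gre}, and then transports this to an arbitrary critical $\pmb a=(\pmb c_{max}+\pmb c)^\circ$ by the same stabilization identities you use. Your route has the small advantage of making transparent the paper's remark that this direction needs only $G$-positivity. Your side claim that $G$-strong positivity forces $\sigma\succ\pmb 0$ is also correct, though it plays no role in what follows.

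The converse, however, has a genuine gap, and it sits exactly where you say it does. Your reduction is sound: if for some large $k$ there is a legal firing sequence from $\pmb a+k\sigma\Delta$ reaching some $\pmb w\succeq\pmb c_{max}$, then $\pmb a=(\pmb a+k\sigma\Delta)^\circ=\pmb w^\circ=\bigl(\pmb c_{max}+(\pmb w-\pmb c_{max})\bigr)^\circ$ and Lemma \ref{lem:critproperties}(iii) finishes. But you never produce $\pmb w$, and the route you sketch --- finding a moment of the relaxation of $\pmb a+k\sigma\Delta$ at which all vertices are simultaneously loaded --- is the hard and unnecessary way in. The paper constructs the legal sequence by hand, as a chip-transport argument: if $u=v_0,v_1,\dots,v_\ell=v$ is a directed path and $u$ holds at least $p\prod_{i=0}^{\ell-1}d^+(v_i)$ chips, then firing $v_0$ repeatedly, then $v_1$, and so on is legal and delivers at least $p$ chips to $v$. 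In $\pmb a+k\sigma\Delta$, each source component contains a vertex $v$ of $\mathrm{supp}(\sigma\Delta)$ (this is precisely where $G$-strong positivity enters), and that vertex holds at least $k(\sigma\Delta)_v$ chips, which is unbounded in $k$; since every vertex of $G$ is reachable by a directed path from some source component, taking $k$ large enough one can legally route enough chips to every non-sink vertex in turn, reaching some $\pmb w\succeq\pmb c_{max}$. (By the abelian property such a $\pmb w$ is indeed an intermediate state of some stabilization, but the point is that you get to design the firing order yourself rather than analyze an arbitrary one and hunt for a stopping time.) Without this, or an equivalent construction, the converse remains unproved.
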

\begin{proof}
We first prove that $(\pmb c_{max}+\sigma\Delta)^\circ=\pmb c_{max}$. 
Since $\sigma\Delta \succeq 0$, $(\pmb c_{max}+\sigma\Delta)^\circ$ is stable,
hence $(\pmb c_{max}+\sigma\Delta)^\circ\preceq \pmb c_{max}$. 
On the other hand, by Lemma \ref{L:gre}, $w(\pmb
c_{max}+\sigma\Delta)^\circ\geq w(\pmb c_{max})$. Hence,
$(\pmb c_{max}+\sigma\Delta)^\circ= \pmb c_{max}$. 

Now let $\pmb a$ be an arbitrary critical configuration. By
Lemma~\ref{lem:critproperties}, there
exists $\pmb c\succeq 0$ such that $\pmb a=(\pmb c_{max}+\pmb c)^\circ$. Hence:
\begin{align*}
(\pmb a+\sigma\Delta)^\circ &=((\pmb c_{max}+\pmb c)^\circ+\sigma\Delta)^\circ\\
&=(\pmb c_{max}+\pmb c+\sigma\Delta)^\circ \\ 
&=((\pmb c_{max}+\sigma\Delta)^\circ+\pmb c)^\circ \\
&=(\pmb c_{max}+\pmb c)^\circ=\pmb a.  
\end{align*}

Conversely, suppose that $(\pmb a+\sigma\Delta)^\circ=\pmb a$. Observe that if
$u$, $v$ are vertices of $G$ and there is a directed path from $u$ to $v$, then
for all $p > 0$, by putting sufficiently many chips in $u$ and repeatedly (and
legally) firing the vertices along that path, one can increase the number of
chips in $v$ by at least $p$ (for instance, if $u = v_0, v_1, \cdots, v_k = v$
is a directed path from $u$ to $v$, then $p \prod_{i = 0}^{k - 1}
\mbox{d}^+(v_i)$ chips in $u$ are enough). Thus, for all $p > 0$, starting with
sufficiently many chips in one vertex of each source component of $G$, one can
redistribute chips in $G$ such that every non-sink vertex has at least
$p$ chips. Since $\sigma$ is $G$-strongly positive, the support of
$\sigma\Delta$ on each source component is non-empty.
Therefore, for a sufficiently large number $m \in \mathbb N$, there exists a
legal firing sequence leading from $\pmb a + m \sigma
\Delta$ to a configuration $\pmb a' \succeq \pmb c_{max}$. In other words, there
exist a positive integer $m$, a positive configuration $\pmb c$ and a legal
firing sequence from $\pmb a + m \sigma \Delta$ with firing script $\tau$ such
that:
$$\pmb c_{max}+\pmb c=(\pmb a+m\sigma\Delta)-\tau\Delta\,,$$ 
whence
\[
  (\pmb c_{max} + \pmb c)^\circ = (\pmb a + m \sigma \Delta - \tau
  \Delta)^\circ = (\pmb a + m \sigma \Delta)^\circ\,.
\]

On the other hand, since $(\pmb a+\sigma\Delta)^\circ=\pmb a$ and $\sigma\Delta
\succeq \pmb  0$, we have, by induction:
\begin{align*}
  (\pmb a + m \sigma \Delta)^\circ &= (\pmb a +\sigma\Delta+ (m - 1) \sigma \Delta)^\circ=((\pmb a+\sigma\Delta)^\circ + (m - 1) \sigma \Delta)^\circ\\
&=(\pmb a + (m - 1) \sigma \Delta)^\circ =
  \dots = (\pmb a + \sigma \Delta)^\circ = \pmb a\,.
\end{align*}

So $(\pmb c_{max} + \pmb c)^\circ = \pmb a$, which means $\pmb a$ is critical.

Finally, suppose $\pmb a$ is critical and let $\tau$ be the firing script of the
stabilization of $\pmb a + \sigma \Delta$. We have $\pmb a + \sigma \Delta -
\tau \Delta = \pmb a$, hence $(\tau - \sigma) \Delta = \pmb 0$. Since $\Delta$
is non-singular, $\tau = \sigma$.
\end{proof}

\begin{remark}
  The above proof shows that the ``only if'' part also holds when $\sigma$ is
  $G$-positive but not $G$-strongly positive. The ``if'' part, however, requires
  $G$-strong positiveness to be correct. Consider the graph in Figure
  \ref{F:critdigraph} and the scripts $\sigma_1=(0,0,1)$, $\sigma_2=(1,2,4)$. We
  can easily verify that $\sigma_1$ is $G$-positive but not $G$-strongly
  positive and $\sigma_2$ is $G$-strongly positive. Let $\pmb a=(1,1,1)$. Then $(\pmb
  a+\sigma_2\Delta)^\circ=(6,3,1)\ne \pmb a$, which implies that $\pmb a$ is not
  critical. Meanwhile, $(\pmb a+\sigma_1\Delta)^\circ=\pmb
  a$, which means that $\pmb a$ is ``recurrent'' when reverse-firing by the script
  $\sigma_1$. 
\end{remark}

\medskip

In \cite{PP16}, Perrot and Pham posed a question about the 
maximum weight of critical configurations in their equivalence classes and gave
the answer for Eulerian digraphs. Here, we give the answer for general
digraphs with a global sink.

\begin{theorem} \label{T:max} 
  Let $\pmb a$ be a critical configuration and $\pmb b$
  be a stable configuration in the equivalence class of $\pmb a$. Then $w(\pmb
  a)\geq w(\pmb b)$.
\end{theorem}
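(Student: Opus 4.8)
The plan is to exploit the recurrence property established in Proposition~\ref{P:cri} together with the weight-monotonicity from Lemma~\ref{L:gre}. The key observation is that the critical configuration $\pmb a$ is a fixed point of the operation ``reverse-fire a $G$-strongly positive script $\sigma$, then stabilize,'' whereas an arbitrary stable configuration $\pmb b$ in the same equivalence class is not; yet both live in the same class, so they are connected by some integer script. First I would fix a $G$-strongly positive script $\sigma$ (which exists by the remarks following the definition) and recall from Proposition~\ref{P:cri} that $(\pmb a + \sigma\Delta)^\circ = \pmb a$, with firing script exactly $\sigma$.

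The next step is to bring $\pmb b$ into the picture. Since $\pmb a \sim \pmb b$, there is an integer vector $\rho$ with $\pmb a = \pmb b + \rho\Delta$. The idea is to add a large multiple of $\sigma$ so that the relevant scripts become non-negative and the reverse-firings legitimate. Concretely, I would consider the configuration $\pmb b + m\sigma\Delta$ for $m$ large, so that $m\sigma + \rho$ (or an analogous combination) is non-negative; then $(\pmb b + m\sigma\Delta)^\circ$ is a well-defined stable configuration linearly equivalent to both $\pmb a$ and $\pmb b$, obtained from $\pmb b$ by reverse-firing the $G$-positive script $m\sigma$ and stabilizing. By Lemma~\ref{L:gre} applied to $m\sigma$, reverse-firing a $G$-positive script and stabilizing does not decrease weight, giving a chain of weight inequalities $w\big((\pmb b + m\sigma\Delta)^\circ\big) \ge w(\pmb b)$.

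The crux is then to show that iterating this operation from $\pmb b$ eventually reaches the critical configuration $\pmb a$, so that $w(\pmb a) \ge w(\pmb b)$. Here I would argue that repeatedly applying the map $\pmb x \mapsto (\pmb x + \sigma\Delta)^\circ$ starting at $\pmb b$ produces a sequence of stable configurations, all in the same equivalence class, whose weights are non-decreasing by Lemma~\ref{L:gre}. Because there are only finitely many stable configurations in a class, the sequence must stabilize at a fixed point $\pmb a^\ast$ satisfying $(\pmb a^\ast + \sigma\Delta)^\circ = \pmb a^\ast$; by the ``if'' direction of Proposition~\ref{P:cri}, $\pmb a^\ast$ is critical, and by uniqueness of the critical configuration in each class (Lemma~\ref{lem:critproperties}~i)), $\pmb a^\ast = \pmb a$. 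Chaining the weight inequalities along this finite orbit yields $w(\pmb a) = w(\pmb a^\ast) \ge w(\pmb b)$.

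The main obstacle I anticipate is the bookkeeping in the second paragraph: ensuring that the map $\pmb x \mapsto (\pmb x + \sigma\Delta)^\circ$ genuinely preserves the equivalence class (immediate, since $\sigma\Delta$ and the stabilization script are integer combinations of the $\Delta_i$) and, more delicately, confirming that the weight is \emph{non-strictly} increasing at every step while the orbit is guaranteed to terminate at the unique fixed point rather than cycling. The finiteness of stable configurations rules out nontrivial cycles once monotonicity of weight is in hand, but I would need to verify that $\sigma$ being merely $G$-positive (as opposed to strongly positive) suffices for Lemma~\ref{L:gre} at each iteration while $G$-strong positiveness is what forces the fixed point to be critical via Proposition~\ref{P:cri}. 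Reconciling these two roles of $\sigma$ is the subtle point, but both hypotheses are satisfied by a single $G$-strongly positive script, so the argument should close cleanly.
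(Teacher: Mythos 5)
Your overall strategy is the same as the paper's: iterate the map $\pmb x \mapsto (\pmb x + \sigma\Delta)^\circ$ starting from $\pmb b$, use Lemma \ref{L:gre} for weight monotonicity, finiteness of the equivalence class for recurrence, Proposition \ref{P:cri} to identify the recurrent configuration as critical, and uniqueness of the critical configuration (Lemma \ref{lem:critproperties}) to conclude it equals $\pmb a$. Your second paragraph, about choosing $m$ so that $m\sigma+\rho$ becomes non-negative, plays no role in the final argument and can be dropped.

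There is, however, a genuine gap at the crux. You claim that finiteness of the set of stable configurations, together with weight monotonicity, forces the orbit to terminate at a \emph{fixed point} $\pmb a^*$ with $(\pmb a^* + \sigma\Delta)^\circ = \pmb a^*$, asserting that ``finiteness of stable configurations rules out nontrivial cycles once monotonicity of weight is in hand.'' That inference is invalid: finiteness only gives eventual periodicity, i.e. indices $k < \ell$ with $\pmb b_k = \pmb b_\ell$, and monotone weights along a cycle merely become constant, which does not force the cycle to have length one --- distinct equivalent stable configurations can share the same weight (the paper's Figure \ref{F:sameweight} exhibits exactly this situation). The paper closes this point differently: from $\pmb b_k = \pmb b_\ell$ one obtains, using $(\pmb x + \pmb y)^\circ = (\pmb x^\circ + \pmb y)^\circ$ and $\sigma\Delta \succeq \pmb 0$, that $\pmb b_k = \bigl(\pmb b_k + (\ell-k)\sigma\Delta\bigr)^\circ$; since $(\ell-k)\sigma$ is again $G$-strongly positive, Proposition \ref{P:cri} applied with the script $(\ell-k)\sigma$ shows directly that $\pmb b_k$ is critical, with no claim about fixed points of the one-step map needed. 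With this repair --- which uses only tools you already invoke --- your argument goes through.
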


\begin{proof}
  Let $\sigma$ be a $G$-strongly positive script. Consider the following
  sequence of stable configurations:
  \[
    \pmb b_0 = \pmb b\,, \pmb b_{k + 1} = (\pmb b_k + \sigma \Delta)^\circ
    \mbox{ for all } k \ge 0\,.
  \]
  By Lemma \ref{L:gre}, $\{w(\pmb b_k)\}$ is non-decreasing. In particular,
  $w(\pmb b_k) \ge w(\pmb b)$ for all $k$.

  The stable configurations $(\pmb b_k)$ all belong to the equivalence class of
  $\pmb a$, which is finite, so there exist $k < \ell$ such that $\pmb b_k =
  \pmb b_{\ell}$, or:
  \[
   \pmb b_k= \pmb b_\ell = (\pmb b_k + (\ell - k) \sigma \Delta)^\circ\,.
  \]
  Note that if $\sigma$ is $G$-strongly positive, then $m \sigma$ is also
  $G$-strongly positive for all positive integer $m$. So applying Proposition
  \ref{P:cri} with $\pmb b_k$ and $(\ell - k) \sigma$ yields that $\pmb b_k$ is
  critical.
  But since each equivalence class has only one critical configuration, $\pmb
  b_k = \pmb a$. Thus $w(\pmb a) \ge w(\pmb b)$.
\end{proof}

As mentioned at the end of section \ref{sec:prelim},
to decide whether a configuration is critical, one may use Dhar's burning
algorithm~\cite{Dha90} for undirected graphs and Speer's script
algorithm~\cite{Spe93} for strongly connected digraphs. 
Both algorithms are based on Proposition \ref{P:cri}: the idea is to use a 
minimal $G$-strongly positive script to test for the recurrence of a critical configuration. 
In Dhar's algorithm, the minimum script is always $(1,1,\dots,1)$. 
In Speer's algorithm, the script is not explicitly given, but one can construct
it through a greedy iteration process: starting with the initial script 
$\sigma=(1,0,\dots,0)$, at each step, one chooses an index $i$ such that
$(\sigma\Delta)_i$ is negative and increments $\sigma_i$.
In 2011, Perkinson et al.~\cite{PPW11} developed
the script algorithm for digraphs with a global sink by taking the initial
script $\sigma=(1,1,\dots,1)$. This initial script indicates that every vertex
has to fire at least once in a recurrent firing sequence. 
It is showed in \cite{Spe93} that the output of the script algorithm is uniquely
determined and is in fact the unique minimum $G$-strongly positive script. We denote this script by $\sigma^M$. 

{\bf Example:} For the graph in Figure \ref{F:critdigraph}, the minimum
$G$-strongly positive script is $\sigma^M=(1,2,3)$. 

\medskip

The next theorem is an extension of Aval \emph{et al.}'s result~\cite{ADDL16} for
undirected graphs to digraphs with a global sink. Essentially, it states that
one cannot obtain a stable configuration from a critical one by reverse-firing a
non-empty multi-subset of vertices. This result is instrumental in the proof of
the energy maximizing characterization of critical configurations (Theorem
\ref{T:max2}) and in the proof of the duality theorem (Theorem \ref{T:dua}).

\begin{theorem}\label{T:script}
  Let $\sigma^M$ be the minimum $G$-strongly positive script and let $\pmb a$ be a
  stable configuration of $\CFG(G)$. The following statements are equivalent:
  \begin{enumerate}[label=\roman*)]
    \item $\pmb a$ is critical;
    \item $\pmb a+\tau\Delta$ is not stable for all $\tau \succ \pmb 0$;
    \item $\pmb a+\tau\Delta$ is not stable for all $\tau$ such that $\pmb 0 \prec
      \tau \prec \sigma^M$.
  \end{enumerate}
\end{theorem}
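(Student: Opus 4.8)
The plan is to prove the cyclic chain of implications $(i) \Rightarrow (ii) \Rightarrow (iii) \Rightarrow (i)$, using the earlier results on $G$-strongly positive scripts and the stabilization machinery. The implication $(ii) \Rightarrow (iii)$ is immediate, since the condition in $(iii)$ quantifies over a strictly smaller set of scripts $\tau$ (those additionally bounded above by $\sigma^M$) than the condition in $(ii)$; thus $(ii)$ trivially entails $(iii)$. The real content lies in the two remaining arrows.

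For $(i) \Rightarrow (ii)$, I would argue by contradiction. Suppose $\pmb a$ is critical and that $\pmb a + \tau\Delta$ is stable for some $\tau \succ \pmb 0$. The natural strategy is to exploit Lemma \ref{L:prec}: since $\pmb a + \tau\Delta$ is already stable, its stabilization requires the empty firing script, so I want to derive a contradiction with the fact that reverse-firing a positive script ought to strictly ``increase'' the configuration in a way incompatible with stability. More concretely, Proposition \ref{P:cri} tells us that for a $G$-strongly positive script $\sigma$, reverse-firing it from a critical configuration and stabilizing returns the same configuration, with firing script exactly $\sigma$. The idea is to combine the arbitrary positive $\tau$ with a $G$-strongly positive script: pick $\sigma$ $G$-strongly positive, and consider $(\pmb a + (\tau + \sigma)\Delta)^\circ$. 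Since $\tau + \sigma$ is also $G$-strongly positive (as $\sigma$ is and $\tau \succeq \pmb 0$), Proposition \ref{P:cri} gives that this stabilizes back to $\pmb a$ with firing script $\tau + \sigma$. But one can also stabilize in two stages: first reach the stable configuration $\pmb a + \tau\Delta$ (no firing needed), then reverse-fire $\sigma$ from it. This forces the firing script of the second stage to be $\tau + \sigma$, contradicting Lemma \ref{L:prec}, which bounds that script by $\sigma$ (since $\tau \succ \pmb 0$ means $\tau + \sigma \npreceq \sigma$). I expect this step to be the main obstacle: getting the bookkeeping of the two-stage stabilization and its firing script precisely right, and ensuring the strict positivity of $\tau$ yields a genuine violation of the $\preceq$ bound.

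For $(iii) \Rightarrow (i)$, I would again argue by contraposition: assume $\pmb a$ is stable but not critical, and produce a script $\tau$ with $\pmb 0 \prec \tau \prec \sigma^M$ such that $\pmb a + \tau\Delta$ is stable. By Proposition \ref{P:cri}, since $\pmb a$ is not critical we have $(\pmb a + \sigma^M\Delta)^\circ \ne \pmb a$, so the stabilization of $\pmb a + \sigma^M\Delta$ uses a nontrivial firing script $\rho$. By Lemma \ref{L:prec}, $\rho \preceq \sigma^M$, and the stabilized configuration $\pmb a + \sigma^M\Delta - \rho\Delta = \pmb a + (\sigma^M - \rho)\Delta$ is stable. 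Setting $\tau = \sigma^M - \rho$, this is exactly a stable configuration of the form $\pmb a + \tau\Delta$ with $\tau \preceq \sigma^M$. The delicate points are establishing the \emph{strict} inequalities: I must show $\tau \succ \pmb 0$ (equivalently $\rho \prec \sigma^M$) and $\tau \prec \sigma^M$ (equivalently $\rho \succ \pmb 0$). The latter, $\rho \succ \pmb 0$, should follow from $G$-strong positiveness forcing every vertex to fire during stabilization from $\pmb a + \sigma^M\Delta$ when $\pmb a$ fails to be recurrent; the former, $\tau \succ \pmb 0$, requires arguing that not every coordinate of $\sigma^M$ is consumed, which should hold because $\sigma^M$ is \emph{minimal} $G$-strongly positive and $\rho = \sigma^M$ would force $\pmb a + \sigma^M\Delta - \sigma^M\Delta = \pmb a$ to be the stabilization, contradicting non-criticality. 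Verifying these strict bounds rigorously, especially coordinate-by-coordinate, is where I anticipate the finest care will be needed.
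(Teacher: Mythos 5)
Your implication $ii)\Rightarrow iii)$ is fine, and your overall skeleton (combine $\tau$ with a $G$-strongly positive script, then play Proposition \ref{P:cri} against Lemma \ref{L:prec}) is the paper's. But the pivotal step of your $i)\Rightarrow ii)$ is wrong: the claim that $\tau+\sigma$ is $G$-strongly positive ``as $\sigma$ is and $\tau\succeq\pmb 0$'' does not hold. $G$-positivity is a condition on the vector $\sigma\Delta$, not on $\sigma$; adding a non-negative script $\tau$ adds $\tau\Delta$, which can have negative coordinates that $\sigma\Delta$ does not compensate. Concretely, take non-sink vertices $1,2$ with edges $1\to 2$, $2\to 1$, $2\to s$, so that
\[
\Delta=\begin{pmatrix}1&-1\\-1&2\end{pmatrix},\qquad \sigma^M=(1,1),\qquad \sigma^M\Delta=(0,1).
\]
With $\sigma=\sigma^M$ and $\tau=(0,1)\succ\pmb 0$ one gets $(\tau+\sigma)\Delta=(-1,3)$, so $\tau+\sigma$ is not even $G$-positive; and no large multiple helps, since $(\tau+m\sigma^M)\Delta=(-1,2+m)$ stays negative in the first coordinate for every $m$ (because $(\sigma^M\Delta)_1=0$). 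Without $G$-strong positivity of $\tau+\sigma$ you cannot invoke Proposition \ref{P:cri} to conclude that $\pmb a+(\tau+\sigma)\Delta$ stabilizes to $\pmb a$ with script $\tau+\sigma$, and your contradiction evaporates. The paper's proof avoids addition altogether and goes \emph{subtractively}: set $m=\max_i\tau_i+1$ and $\sigma=m\sigma^M$, so that $\sigma-\tau\succ\pmb 0$ is a genuine script; then $\pmb a+\sigma\Delta=\pmb b+(\sigma-\tau)\Delta$ with $\pmb b=\pmb a+\tau\Delta$ stable, Proposition \ref{P:cri} says the stabilizing script of the left-hand side is $\sigma$, while Lemma \ref{L:prec} applied to the right-hand side bounds that script by $\sigma-\tau\prec\sigma$ --- contradiction. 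That ``make the $G$-strongly positive script large and subtract $\tau$'' move is the missing idea.

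In $iii)\Rightarrow i)$ your argument for $\tau=\sigma^M-\rho\succ\pmb 0$ is correct (Lemma \ref{L:prec} gives $\rho\preceq\sigma^M$, and $\rho=\sigma^M$ would give $(\pmb a+\sigma^M\Delta)^\circ=\pmb a$, hence criticality by Proposition \ref{P:cri}). But your proposed justification of the other strict bound, $\rho\succ\pmb 0$, is false: nothing forces any vertex to fire, since $\pmb a+\sigma^M\Delta$ may already be stable. In the example above, $\pmb a=(0,0)$ is stable and non-critical, yet $\pmb a+\sigma^M\Delta=(0,1)$ is stable, so $\rho=\pmb 0$ and $\tau=\sigma^M\not\prec\sigma^M$. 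Worse, this example shows the strict-bound version of $iii)$ cannot imply $i)$ at all: the only scripts with $\pmb 0\prec\tau\prec\sigma^M=(1,1)$ are $(1,0)$ and $(0,1)$, and both $(1,-1)$ and $(-1,2)$ are unstable, so $(0,0)$ satisfies $iii)$ while failing to be critical. You were right to flag this as the delicate point, but the cure is not a cleverer argument for $\rho\succ\pmb 0$; it is to read $iii)$ with the non-strict upper bound $\pmb 0\prec\tau\preceq\sigma^M$ (matching Corollary \ref{C:dua}), after which your derivation --- which is then exactly the paper's --- goes through and the bound $\rho\succ\pmb 0$ is never needed. (For what it is worth, the paper's own proof asserts $\sigma^M-\tau\prec\sigma^M$ at this point with no justification, so it silently carries the same defect.)
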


\begin{proof}\mbox{}

  $i)\Rightarrow ii)$:
  Let $\pmb a$ be critical and suppose that there exists a script $\tau \succ 
  \pmb 0$ such that $\pmb b = \pmb a + \tau \Delta$ is stable. Let $m =
  \max\{\tau_i\} + 1$ and $\sigma = m \sigma^M$. Then $\sigma - \tau \succ \pmb 0$.

  Since $\sigma$ is $G$-strongly positive, we have, by
  Proposition \ref{P:cri}, $(\pmb a + \sigma \Delta)^\circ = \pmb a$ and that
  $\sigma$ is the stabilizing script of $\pmb a + \sigma \Delta$ which is also equal to $\pmb b +
  (\sigma - \tau) \Delta$. But by Lemma \ref{L:prec}, the stabilizing script of
  $\pmb b + (\sigma - \tau) \Delta$ cannot exceed $\sigma - \tau$,
  contradiction!

  $ii)\Rightarrow iii)$: Trivial.

  $iii)\Rightarrow i)$: Let $\pmb b = (\pmb a + \sigma^M \Delta)^\circ$. Suppose that
  $\pmb b \neq \pmb a$. Let $\tau$ be the stabilizing script of $\pmb a +
  \sigma^M \Delta$. By Lemma \ref{L:prec} and since $\pmb b \neq \pmb a$, $\tau
  \prec \sigma^M$. Now $\pmb b = \pmb a + (\sigma^M - \tau) \Delta$ is stable
  and $\pmb 0 \prec \sigma^M - \tau \prec \sigma^M$, contradiction! So $(\pmb a +
  \sigma^M \Delta)^\circ = \pmb a$, which means $\pmb a$ is critical.
\end{proof}

\medskip

Theorem \ref{T:max} shows that a critical configuration has maximum weight among
all of the stable configurations of its equivalence class. This property,
however, does not characterize critical configurations, in the sense that there
may be non-critical configurations with the same weight, see Figure
\ref{F:sameweight} for an example. The notion of energy vector, defined
hereafter, will 
provide a maximal characterization of critical configurations.

\begin{definition}[$\CFG$ order and energy vector]\mbox{}

  Let $\pmb a$ and $\pmb b$ be two configurations. We say that $\pmb a
  \preceq_{\CFG} \pmb b$ if $\pmb a \Delta^{-1} \preceq \pmb b \Delta^{-1}$. The
  vector $\pmb a \Delta^{-1}$ is called the {\em energy vector} of the
  configuration $\pmb a$.
\end{definition}

\begin{figure}[hbt] 
  \begin{center}
    \includegraphics[scale=0.7]{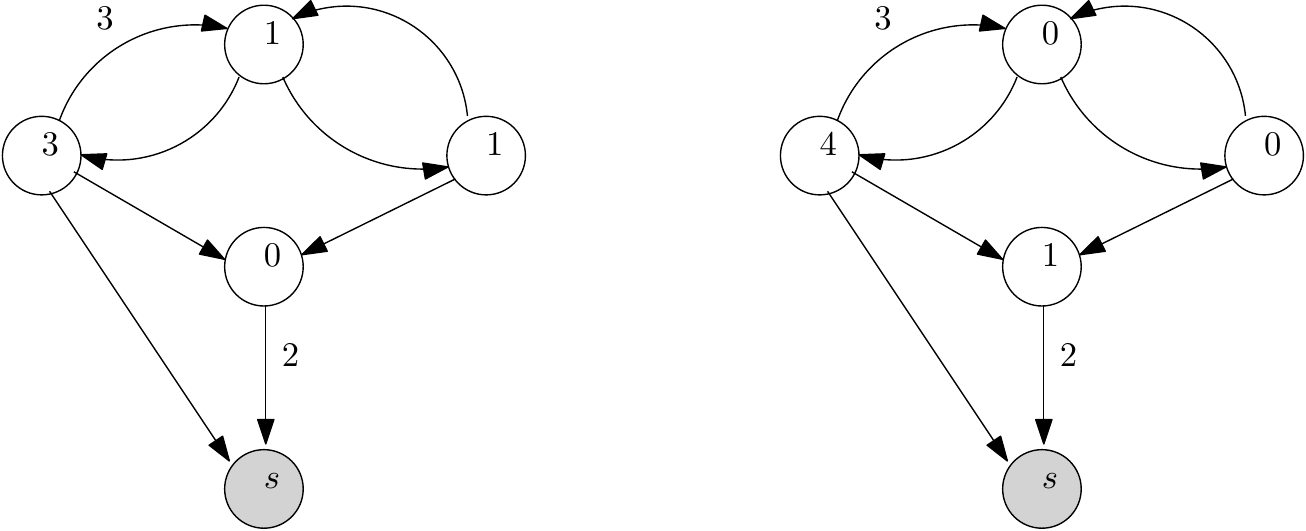}
    \caption{Two stable configurations in the same equivalence class with the
    same weight of $5$. The left one is critical whereas the right one is not. \label{F:sameweight}}
  \end{center}
  
\end{figure}

The following properties are immediate. 

\begin{lemma}\label{L:evectorproperty}\mbox{} Let $\pmb a, \pmb b$ be two configurations of $\CFG(G)$. Then   
  \begin{enumerate}[label=\roman*)]
    \item The binary relation $\preceq_{\CFG}$ is a partial order on $\mathbb Z^n$.
    \item If $\pmb a \sim \pmb b$ and $\pmb a \preceq_{\CFG} \pmb b$, then there
      exists a unique non-negative script $\sigma$ such that $\pmb b - \pmb a =
      \sigma \Delta\,.$
  \end{enumerate}
\end{lemma}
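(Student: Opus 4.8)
The plan is to reduce both parts to two facts already available: the non-singularity of $\Delta$ (Property (ii) of Section \ref{sec:prelim}), which makes $\Delta^{-1}$ an injective linear map, and the elementary fact that the containment order $\preceq$ is itself a partial order on $\mathbb Z^n$. The relation $\preceq_{\CFG}$ is nothing but the pullback of $\preceq$ along the map $\pmb x \mapsto \pmb x \Delta^{-1}$, so most of the work is simply transporting order axioms across this map.

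For part (i), I would verify the three partial-order axioms. Reflexivity and transitivity are immediate from the corresponding properties of $\preceq$: one always has $\pmb a \Delta^{-1} \preceq \pmb a \Delta^{-1}$, and $\pmb a \Delta^{-1} \preceq \pmb b \Delta^{-1} \preceq \pmb c \Delta^{-1}$ gives $\pmb a \Delta^{-1} \preceq \pmb c \Delta^{-1}$. Antisymmetry is the only place where the structure of $\Delta$ is needed: if $\pmb a \preceq_{\CFG} \pmb b$ and $\pmb b \preceq_{\CFG} \pmb a$, then $\pmb a \Delta^{-1} \preceq \pmb b \Delta^{-1}$ and $\pmb b \Delta^{-1} \preceq \pmb a \Delta^{-1}$ force $\pmb a \Delta^{-1} = \pmb b \Delta^{-1}$; multiplying on the right by $\Delta$ (equivalently, using that $\Delta^{-1}$ is invertible) yields $\pmb a = \pmb b$.

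For part (ii), I would begin from the hypothesis $\pmb a \sim \pmb b$, which by the definition of linear equivalence supplies an integer vector $\sigma \in \mathbb Z^n$ with $\pmb b - \pmb a = \sigma \Delta$. Since $\Delta$ is non-singular, this $\sigma$ is forced to equal $(\pmb b - \pmb a)\Delta^{-1}$ and is therefore unique. It remains to check non-negativity: rewriting $\sigma = \pmb b \Delta^{-1} - \pmb a \Delta^{-1}$ and invoking the hypothesis $\pmb a \preceq_{\CFG} \pmb b$, that is $\pmb a \Delta^{-1} \preceq \pmb b \Delta^{-1}$, gives $\sigma \succeq \pmb 0$ componentwise. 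Thus $\sigma$ is an integer, non-negative script, and it is the unique vector with the stated property.

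Since every step is a one-line consequence of an already-established fact, I do not anticipate a genuine obstacle; the only point deserving a moment's care is the coincidence of the a priori merely rational vector $(\pmb b - \pmb a)\Delta^{-1}$ with the integer script produced by linear equivalence, which is resolved precisely by the uniqueness coming from non-singularity of $\Delta$.
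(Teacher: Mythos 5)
Your proof is correct and matches the intended argument: the paper states this lemma without proof (``The following properties are immediate''), and your verification --- pulling back the containment order through the invertible map $\pmb x \mapsto \pmb x \Delta^{-1}$, using non-singularity of $\Delta$ for antisymmetry in part (i) and for uniqueness in part (ii) --- is precisely the routine check the authors are alluding to. The only cosmetic discrepancy is the sign convention in the paper's definition of $\sim$ (equation (\ref{eq:firingequation}) reads $\pmb b = \pmb a - \sigma\Delta$), which is harmless since the witness there ranges over all of $\mathbb Z^n$.
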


\begin{remark}
  In the chip-firing game literature, $\pmb b$ is said to be {\em accessible}
  from $\pmb a$ if there is a legal firing sequence leading from $\pmb a$ to
  $\pmb b$. The accessibility relation induces a partial order on the set of
  non-negative configurations. It is easy to see that the accessibility order is
  a sub-order of $\preceq_{\CFG}$.
\end{remark}

\begin{comment}
Now let $\pmb a, \pmb b\in \CFG(G)$. We say that $\pmb a\preceq_{\CFG} \pmb b$
if $\pmb a\Delta^{-1} \preceq \pmb b\Delta^{-1}$. The vector $pmb a \Delta^{-1}$
is called an \emph{energy vector} of $\pmb a$. It is easy to check that
$\preceq_{\CFG}$ is a partial order on $\CFG(G)$. In case of $\pmb a\sim \pmb b$
and $\pmb a\preceq_{\CFG} \pmb b$, then $\pmb b-\pmb a$ is represented uniquely
as a linear combination with non-negative coefficients of the rows of $\Delta$ .
Note that in the literature ones consider the accessibility relation of a
$\CFG$: $\pmb a$ is accessible from $\pmb b$ if $\pmb a$ is obtained from $\pmb
b$ by a legal firing sequence. The set of configurations accessible from an
initial configuration together with the accessibility relationship forms a
partial order and more precisely a lattice~\cite{LP01}. It is easily seen that
the order induced by the accessibility is in fact a suborder of the order
$\preceq_{\CFG}$. We have the following
\end{comment}

\begin{theorem}\label{T:max2} 
  In the set of stable configurations of each
  equivalence class, the critical configuration is the greatest 
  with respect to the $\preceq_{\CFG}$ order. Equivalently, the critical
  configuration is of the greatest energy (w.r.t. the containment order).
\end{theorem}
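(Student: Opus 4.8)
The plan is to show that for any stable configuration $\pmb b$ in the equivalence class of the critical configuration $\pmb a$, we have $\pmb b \preceq_{\CFG} \pmb a$, with equality only when $\pmb b = \pmb a$. First I would fix a critical configuration $\pmb a$ and an arbitrary stable $\pmb b \sim \pmb a$ in the same class. The natural strategy is to compare the energy vectors $\pmb a \Delta^{-1}$ and $\pmb b \Delta^{-1}$ by exhibiting a non-negative script connecting them: since $\pmb a \sim \pmb b$, there is some integer script $\sigma$ with $\pmb a = \pmb b + \sigma \Delta$, and the goal reduces to proving that one can choose $\sigma \succeq \pmb 0$. By Lemma~\ref{L:evectorproperty} ii), establishing $\pmb b \preceq_{\CFG} \pmb a$ is equivalent to the existence of a non-negative $\sigma$ with $\pmb a - \pmb b = \sigma \Delta$, so the two formulations are interchangeable.

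The key step, where Theorem~\ref{T:script} enters, is to rule out the possibility that the connecting script has mixed signs. Suppose $\pmb a - \pmb b = \sigma \Delta$ where $\sigma$ has both positive and negative entries. I would decompose $\sigma = \sigma^+ - \sigma^-$ into its positive and negative parts, both non-negative and with disjoint support, with $\sigma^- \succ \pmb 0$ on a non-empty set. Then $\pmb a + \sigma^- \Delta = \pmb b + \sigma^+ \Delta$. The idea is to produce from this a stable configuration obtained from $\pmb a$ by reverse-firing a strictly positive script, contradicting Theorem~\ref{T:script} ii). Concretely, reverse-firing $\sigma^-$ from $\pmb a$ lands at $\pmb b + \sigma^+ \Delta$; I would argue that after stabilizing, or after an appropriate manipulation, this yields a stable configuration reachable from $\pmb a$ by reverse-firing a non-empty multi-subset, which is forbidden. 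The cleanest route is likely: if $\pmb b \not\preceq_{\CFG} \pmb a$, then the connecting $\sigma$ is not non-negative, so $\sigma^- \succ \pmb 0$ somewhere; since $\pmb b$ is stable and $\pmb b = \pmb a + \sigma^- \Delta - \sigma^+ \Delta$, one has $\pmb a + \sigma^- \Delta = \pmb b + \sigma^+\Delta$, and I would use the stabilization identities together with Lemma~\ref{L:prec} to extract a script $\tau \succ \pmb 0$ with $\pmb a + \tau \Delta$ stable.

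I expect the main obstacle to be the sign-decomposition argument: turning the mixed-sign script $\sigma$ into a genuinely positive reverse-firing that keeps the resulting configuration stable, so that Theorem~\ref{T:script} ii) applies verbatim. The subtlety is that $\pmb a + \sigma^- \Delta$ need not itself be stable, so one cannot directly invoke the theorem with $\tau = \sigma^-$; the reasoning must route through the equality $\pmb a + \sigma^-\Delta = \pmb b + \sigma^+\Delta$ and the stability of $\pmb b$, possibly stabilizing and tracking firing scripts via Lemma~\ref{L:prec} to recover a strictly positive $\tau$ whose reverse-firing from $\pmb a$ produces a stable configuration. Once the comparability $\pmb b \preceq_{\CFG} \pmb a$ is secured for every stable $\pmb b$ in the class, the greatest-element claim follows, and uniqueness of the maximum comes from Lemma~\ref{lem:critproperties} i) which guarantees a single critical configuration per class, so no other stable configuration can tie it in the $\preceq_{\CFG}$ order.
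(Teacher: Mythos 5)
Your overall strategy is sound and genuinely different from the paper's, but as written the proof stalls exactly at the step you yourself flag as the ``main obstacle'': you never actually produce a script $\tau \succ \pmb 0$ with $\pmb a + \tau\Delta$ stable, you only express the hope that stabilization plus Lemma \ref{L:prec} will extract one. Moreover, the patch you suggest is problematic on its own terms: stabilizing $\pmb b + \sigma^+\Delta$ is not a priori well-defined, since that configuration need not be non-negative (its entries outside $\mbox{supp}(\sigma^+)$ can drop below zero). The missing idea that closes the gap is simpler than what you propose: no stabilization is needed at all, because $\pmb c = \pmb a + \sigma^-\Delta = \pmb b + \sigma^+\Delta$ is \emph{already} stable. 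Indeed, $\mbox{supp}(\sigma^+)$ and $\mbox{supp}(\sigma^-)$ are disjoint and every off-diagonal entry of $\Delta$ is non-positive; hence if $j \in \mbox{supp}(\sigma^-)$, then $j \notin \mbox{supp}(\sigma^+)$, so $(\sigma^+\Delta)_j \le 0$ and $c_j \le b_j < d^+_j$ by stability of $\pmb b$, while if $j \notin \mbox{supp}(\sigma^-)$, then $(\sigma^-\Delta)_j \le 0$ and $c_j \le a_j < d^+_j$ by stability of $\pmb a$. (The configuration $\pmb c$ may have negative entries, but the paper's notion of stability --- no active vertex --- does not require non-negativity, and this is all that Theorem \ref{T:script} ii) forbids.) So you may take $\tau = \sigma^-$ verbatim: $\pmb a + \sigma^-\Delta$ is stable with $\sigma^- \succ \pmb 0$, contradicting Theorem \ref{T:script} ii) for the critical configuration $\pmb a$. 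Therefore $\sigma^- = \pmb 0$, the connecting script $\sigma$ is non-negative, and $\pmb b \preceq_{\CFG} \pmb a$ follows, strictly when $\pmb b \neq \pmb a$ since $\Delta$ is non-singular.

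Once completed this way, your route is a real alternative to the paper's. The paper argues in the opposite direction: from a non-critical stable configuration it uses Theorem \ref{T:script} (in the contrapositive direction, non-critical implies some positive reverse-firing stays stable) to build a strictly $\preceq_{\CFG}$-increasing chain of stable configurations, and then needs two further ingredients to finish: finiteness of the set of stable configurations in each equivalence class, so the chain terminates, and uniqueness of the critical configuration per class (Lemma \ref{lem:critproperties} i)) to identify the terminal configuration. Your argument uses only the implication i) $\Rightarrow$ ii) of Theorem \ref{T:script} together with the sign structure of the Laplacian, needs neither finiteness nor per-class uniqueness, and compares the two configurations in one shot; the completed version is arguably shorter and cleaner than the paper's proof. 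But the disjoint-support stability observation is indispensable to it, and without that step the proposal as submitted does not go through.
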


\begin{proof} Let $\pmb a$ be an arbitrary stable configuration and $\pmb a^*$
  be the critical configuration in its equivalence class.
  Assume that $\pmb a$ is not
  critical. We will prove that $\pmb a\prec_{\CFG} \pmb a^*$. 
  
  Since $\pmb a$ is not critical, by Theorem
  \ref{T:script}, there exists a script $\tau_1\succ \pmb 0$ such that $\pmb
  a_1=\pmb a+\tau_1\Delta$ is stable. Then $\pmb a_1\sim \pmb a$ and $\pmb
  a_1\succ_{\CFG} \pmb a$ (since $\pmb a_1\Delta^{-1}-\pmb
  a\Delta^{-1}=\tau_1\succ \pmb 0$).
  Now, if $\pmb a_1$ is critical, then $\pmb a_1=\pmb a^*$, hence $\pmb
  a^*=\pmb a_1\succ_{\CFG} \pmb a$. If $\pmb a_1$ is not critical, by Theorem
  \ref{T:script}, there exists a script $\tau_2 \succ \pmb 0$ such
  that $\pmb a_2=\pmb a_1+\tau_2\Delta$ is stable, and so $\pmb a\prec_{\CFG} \pmb
  a_1\prec_{\CFG} \pmb a_2$. Repeating the above arguments, we obtain a
  sequence of stable configurations $\pmb a\prec_{\CFG} \pmb a_1\prec_{\CFG} \pmb
  a_2\prec_{\CFG}\cdots$ 
  Since the number of stable configurations in each equivalence class is finite,
  this increasing sequence (w.r.t. the $\preceq_{\CFG}$ order) must stop at a
  critical configuration which in fact coincides with $\pmb a^*$. Thus, $\pmb a^*\succ_{\CFG}
  \pmb a$. 
\end{proof}

\section{The duality between critical configurations and superstable
configurations}\label{sec:duality} 
In this section, we revisit the duality
between critical configurations and superstable configurations (Theorem
\ref{T:dua}). It has been shown, by Asadi and Backman~\cite[Theorem 3.15]{AB11} and Perkinson
\emph{et al.}~\cite[Corollary 5.15]{PPW11} that for a digraph with a global sink,
a configuration is superstable if and only if its complementary configuration is
critical.  The proof of Perkinson \emph{et al.} used advanced algebra techniques
such as the coordinate ring induced by the Laplacian matrix and Gr\"obner bases,
while Asadi and Backman's proof is combinatorial. We give another
combinatorial proof which is different and independent from Asadi and Backman's one.

\begin{definition}[Super-stable configuration] \mbox{} 

  A non-negative configuration $\pmb a$ is \emph{superstable} if for all scripts
  $\sigma \succ \pmb 0$, $\pmb a - \sigma\Delta$ has a negative component.
\end{definition} 

Super-stable configurations are also known as \emph{reduced divisors} or
\emph{$z$-superstable configurations}~\cite{BS13, GK15}.  It can be readily seen
that if $\pmb a$ is superstable and 
$\pmb 0 \preceq \pmb b\preceq \pmb a$, then $\pmb b$ is also superstable.
Moreover, superstable configurations are stable. Indeed, if $\pmb a$ is not
stable and $\tau$ is the stablizing script of $\pmb a$, then $\tau \succ \pmb 0$
and $\pmb a-\tau\Delta = \pmb a^\circ$ is non-negative.

\medskip

We now state the duality theorem.

\begin{theorem}[Duality Theorem]\label{T:dua} Let $\pmb a$ be a stable
  configuration. Then $\pmb a$ is critical if and only if $\pmb c_{max}-\pmb a$
  is superstable. Consequently, the number of critical configurations is equal
  to the number of superstable configurations.  
\end{theorem}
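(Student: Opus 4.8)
The plan is to exploit the reformulation of criticality provided by Theorem \ref{T:script} and translate it, coordinate by coordinate, into the defining condition of superstability for the complementary configuration $\pmb b := \pmb c_{max} - \pmb a$. By Theorem \ref{T:script}, a stable $\pmb a$ is critical exactly when $\pmb a + \tau\Delta$ fails to be stable for every $\tau \succ \pmb 0$; and by definition $\pmb b$ is superstable exactly when $\pmb b - \sigma\Delta$ has a negative coordinate for every $\sigma \succ \pmb 0$ (together with $\pmb b \succeq \pmb 0$). Both notions are thus already phrased as ``for every non-empty multi-subset, something fails'', so the whole proof reduces to matching these two failures.

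The key step is a one-line arithmetic identity. Since $\pmb c_{max} = (d^+_1 - 1, \dots, d^+_n - 1)$ and $\Delta_{ii} = d^+_i$, for any script $\sigma$ and any index $i$ we have $(\pmb b - \sigma\Delta)_i = (d^+_i - 1) - (\pmb a + \sigma\Delta)_i$. As everything is an integer, this yields the equivalence $(\pmb b - \sigma\Delta)_i < 0 \iff (\pmb a + \sigma\Delta)_i \ge d^+_i$, i.e.\ the $i$-th coordinate of $\pmb b - \sigma\Delta$ is negative precisely when vertex $i$ is active in $\pmb a + \sigma\Delta$. Ranging over $i$, the statement ``$\pmb b - \sigma\Delta$ has a negative coordinate'' becomes literally the statement ``$\pmb a + \sigma\Delta$ is not stable''. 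Taking $\tau = \sigma$, the criticality condition of Theorem \ref{T:script}(ii) and the superstability condition coincide.

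It then remains to reconcile the non-negativity requirements, which is where I expect the only genuine care to be needed, since \emph{stable}, \emph{critical} and \emph{superstable} each package a sign constraint differently. From $\pmb a$ stable, i.e.\ $\pmb a \preceq \pmb c_{max}$, we get $\pmb b \succeq \pmb 0$; conversely superstability of $\pmb b$ forces $\pmb b$ to be stable (as noted just after the definition), i.e.\ $\pmb b \preceq \pmb c_{max}$, which gives $\pmb a \succeq \pmb 0$. So in the forward direction, criticality of $\pmb a$ supplies $\pmb a \succeq \pmb 0$ (hence $\pmb b \preceq \pmb c_{max}$) and stability supplies $\pmb b \succeq \pmb 0$, so $\pmb b$ is a bona fide non-negative configuration to which the coordinatewise equivalence applies; in the backward direction, superstability of $\pmb b$ supplies both $\pmb b \succeq \pmb 0$ and $\pmb b \preceq \pmb c_{max}$, hence $\pmb a \succeq \pmb 0$, and Theorem \ref{T:script} then certifies that $\pmb a$ is critical. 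This closes both implications.

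For the counting consequence, the map $\pmb a \mapsto \pmb c_{max} - \pmb a$ is an involution on configurations, and by the equivalence just established it restricts to a bijection between the critical configurations and the superstable configurations; hence the two sets have equal cardinality. The argument is short precisely because the substantive content---that one can never re-stabilize a critical configuration by reverse-firing---has already been isolated in Theorem \ref{T:script}. The main obstacle is therefore not the core computation but the bookkeeping of the three nested sign constraints, which must be tracked in both directions to guarantee that $\pmb a$ and $\pmb b$ simultaneously lie in the valid range $\pmb 0 \preceq \cdot \preceq \pmb c_{max}$.
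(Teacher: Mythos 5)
Your proof is correct, and it rests on the same foundation as the paper's---both directions are reduced to Theorem \ref{T:script} through the complementation map $\pmb a \mapsto \pmb c_{max} - \pmb a$---but your execution is cleaner in the forward direction. The paper argues that direction by contradiction: from a failure of superstability it extracts $\sigma \succ \pmb 0$ with $(\pmb c_{max} - \pmb a) - \sigma\Delta \succeq \pmb 0$, then \emph{stabilizes} that configuration with some script $\tau$, and only then obtains a stable configuration of the form $\pmb a + (\sigma+\tau)\Delta$ contradicting Theorem \ref{T:script}. Your coordinatewise identity $(\pmb c_{max} - \pmb a) - \sigma\Delta = \pmb c_{max} - (\pmb a + \sigma\Delta)$ shows that this stabilization detour is redundant: over the integers, ``$(\pmb c_{max} - \pmb a) - \sigma\Delta$ has a negative component'' is literally ``$\pmb a + \sigma\Delta$ has an active vertex,'' so statement \emph{ii)} of Theorem \ref{T:script} and the superstability condition for $\pmb c_{max}-\pmb a$ are the \emph{same} condition read through complementation, and both implications become one and the same translation. (The paper does use precisely this observation, but only in its converse direction, where it notes that stability of $\pmb a + \sigma\Delta$ means $\pmb a + \sigma\Delta \preceq \pmb c_{max}$.) Your bookkeeping of the sign constraints is also sound: stability of $\pmb a$ gives $\pmb c_{max} - \pmb a \succeq \pmb 0$, and conversely superstability implies stability (as the paper remarks after the definition), giving $\pmb a \succeq \pmb 0$ so that Theorem \ref{T:script} yields genuine criticality; the counting claim then follows from the involution exactly as you say. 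In short, your route buys symmetry and economy over the paper's, at no cost in rigor.
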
 

\begin{proof} 
  Let $\pmb a$ be a critical configuration. Suppose that $\pmb
  c_{max} - \pmb a$ is not superstable. Then there exists a script $\sigma \succ
  \pmb 0$ such that $(\pmb c_{max} - \pmb a) - \sigma \Delta \succeq \pmb 0$.
  Let $\pmb b = (\pmb c_{max} - \pmb a - \sigma \Delta)^\circ$ and let $\tau$ be
  the stabilizing script of $\pmb c_{max} - \pmb a - \sigma \Delta$. Then $\pmb
  a + (\sigma + \tau) \Delta = \pmb c_{max} - \pmb b$, which contradicts Theorem
  \ref{T:script} since $\sigma + \tau \succ \pmb 0$ and $\pmb c_{max} - \pmb b$
  is stable.  So $\pmb c_{max} - \pmb a$ is superstable.

  \smallskip

  Conversely, suppose that $\pmb c_{max} - \pmb a$ is superstable for some
  non-critical stable configuration $\pmb a$. By Theorem \ref{T:script}, there
  exists some script $\sigma \succ \pmb 0$ such that $\pmb a + \sigma \Delta$ is
  stable. In particular, $\pmb a + \sigma \Delta \preceq \pmb c_{max}$, hence
  $(\pmb c_{max} - \pmb a) - \sigma \Delta \succeq \pmb 0$, contradiction!
\end{proof} 

\begin{corollary}\label{C:dua} 
  Let $\pmb a$ be a stable configuration and $\sigma^M$ be the minimum
  $G$-strongly positive script. The following statements are equivalent:
  \begin{enumerate}[label=\roman*)] 
    \item $\pmb a$ is superstable; 
    \item $\pmb a-\sigma\Delta$ has a negative component for all $\pmb
      0 \prec \sigma \preceq \sigma^M$;
    \item $\pmb a-\sigma\Delta$ is not stable for all $\pmb 0\prec \sigma \preceq
      \sigma^M$.
  \end{enumerate} 
\end{corollary}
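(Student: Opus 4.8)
The plan is to prove the cyclic chain $i)\Rightarrow ii)\Rightarrow iii)\Rightarrow i)$, which confines the genuinely new content to the last implication and lets me discharge it using the duality already established. Throughout I will use the operative meaning of stability for these statements, namely that a stable configuration lies in the box $\{\pmb x : \pmb 0 \preceq \pmb x \preceq \pmb c_{max}\}$ (stable configurations being non-negative).

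The first two implications I expect to be immediate. For $i)\Rightarrow ii)$: superstability says that $\pmb a-\sigma\Delta$ has a negative component for \emph{every} $\sigma\succ\pmb 0$, and $ii)$ only asks for this over the sub-range $\pmb 0\prec\sigma\preceq\sigma^M$, so it follows at once. For $ii)\Rightarrow iii)$: a configuration possessing a negative component is in particular not a stable configuration, so if $\pmb a-\sigma\Delta$ always has a negative component over the range, it is never stable there.

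The real work is $iii)\Rightarrow i)$, and here I would pass to the complement. Put $\pmb b=\pmb c_{max}-\pmb a$; since $\pmb a$ is stable, so is $\pmb b$. The map $\pmb x\mapsto\pmb c_{max}-\pmb x$ is an order-reversing involution of the stability box onto itself, so a configuration is stable precisely when its complement is. The identity $\pmb b+\sigma\Delta=\pmb c_{max}-(\pmb a-\sigma\Delta)$ then shows that $\pmb a-\sigma\Delta$ is stable if and only if $\pmb b+\sigma\Delta$ is stable. Hence $iii)$ translates into the assertion that $\pmb b+\sigma\Delta$ is not stable for all $\pmb 0\prec\sigma\preceq\sigma^M$, and in particular for all $\pmb 0\prec\sigma\prec\sigma^M$. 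Applying the implication $iii)\Rightarrow i)$ of Theorem \ref{T:script} to the stable configuration $\pmb b$ yields that $\pmb b$ is critical, whence, by the Duality Theorem \ref{T:dua}, $\pmb a=\pmb c_{max}-\pmb b$ is superstable. This closes the cycle.

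The step I expect to be the main obstacle is the asymmetry between ``having a negative component'' ($ii$) and ``being unstable'' ($iii$): the complement sends ``has a negative component'' to ``has an active vertex,'' which is strictly stronger than ``not stable,'' so a naive term-by-term matching of the corollary against Theorem \ref{T:script} does not line up, and trying to prove $i)\Leftrightarrow ii)$ or $ii)\Leftrightarrow iii)$ directly through the complement runs into exactly this gap. Arranging the implications cyclically as $i)\Rightarrow ii)\Rightarrow iii)\Rightarrow i)$ is what circumvents the difficulty: the two easy inclusions carry the ``negative component'' side, while all the substance is routed through $iii)\Rightarrow i)$, where only the clean box-preserving identity for the predicate ``not stable'' is required, and that predicate is self-dual under the complement.
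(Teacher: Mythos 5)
Your proposal is correct and is essentially the derivation the paper intends: the paper states this corollary without an explicit proof, as an immediate consequence of Theorem \ref{T:script} and the Duality Theorem \ref{T:dua}, and your cyclic argument $i)\Rightarrow ii)\Rightarrow iii)\Rightarrow i)$ — with all the substance routed through complementation $\pmb b = \pmb c_{max} - \pmb a$ in $iii)\Rightarrow i)$ — supplies exactly that. Your explicit observation that ``stable'' must be read as lying in the box $[\pmb 0, \pmb c_{max}]$ (so that ``not stable'' is self-dual under the complement, while ``has a negative component'' and ``has an active vertex'' get swapped) is the right and necessary reading, and it is the one consistent with how the paper uses stability in Lemma \ref{L:prec} and Theorem \ref{T:script}.
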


\smallskip

Recall that for undirected graphs, $\sigma^M=(1,1,\dots,1)$. By Corollary
\ref{C:dua}, in order to check if a configuration $\pmb a$ is superstable, it
suffices to check the non-negativeness or non-stableness of configurations
obtained from $\pmb a$ by firing a \emph{subset} of vertices. This is not true
for digraphs in general. Consider, for instance,  the digraph in Figure
\ref{F:Fig-sta-3}: the configuration $\pmb a=(0,3)$ is not superstable ($\pmb a
- (2, 1) \Delta = (1, 1) \succ \pmb 0$) despite the fact that $\pmb a - \sigma \Delta$ has 
at least one negative component for all $\sigma \in \left\{ (0, 1)\,, (1, 0)\,,
(1, 1) \right\}$.

\medskip

The energy minimizing characteristic of superstable configurations 
is a natural consequence of the duality and the energy
maximizing characteristic of critical configurations.

\begin{corollary}\label{C:sma} 
  Among the non-negative configurations of each
  equivalence class, the superstable configuration is the smallest with respect
  to the $\preceq_{\CFG}$ order.  
\end{corollary}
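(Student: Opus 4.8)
The plan is to deduce the corollary from Theorem \ref{T:max2} (critical configurations are $\preceq_{\CFG}$-greatest among the stable configurations of their class) together with the Duality Theorem \ref{T:dua}, by transporting everything through the complementation map $\phi(\pmb x) = \pmb c_{max} - \pmb x$. The idea is that $\phi$ converts the ``smallest non-negative'' statement we want into the ``greatest stable'' statement we already have.

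First I would record three elementary properties of $\phi$. (a) It is an involution of $\mathbb Z^n$ that sends linearly equivalent configurations to linearly equivalent ones, hence permutes the equivalence classes of $\mathcal{SP}(G)$; this is because $\phi(\pmb x) - \phi(\pmb y) = \pmb y - \pmb x$. (b) It reverses the $\preceq_{\CFG}$ order: since $\phi(\pmb x)\Delta^{-1} = \pmb c_{max}\Delta^{-1} - \pmb x\Delta^{-1}$, we have $\pmb x \preceq_{\CFG} \pmb y$ if and only if $\phi(\pmb y) \preceq_{\CFG} \phi(\pmb x)$. (c) It carries the non-negative configurations bijectively onto the stable configurations: indeed $\pmb x \succeq \pmb 0$ iff $\pmb c_{max} - \pmb x \preceq \pmb c_{max}$, and a configuration is stable precisely when it is $\preceq \pmb c_{max}$ (no vertex $i$ carries $\ge d^+_i$ chips). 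This last point genuinely uses the convention that configurations may have negative entries, so that ``stable'' and ``non-negative'' describe different sets interchanged by $\phi$.

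Then I would fix an equivalence class $C$ and let $\pmb s$ be its superstable configuration. By Theorem \ref{T:dua}, $\phi(\pmb s) = \pmb c_{max} - \pmb s$ is critical, and by (a) it lies in the class $\phi(C)$; by the uniqueness of the critical configuration in a class (Lemma \ref{lem:critproperties}), it is \emph{the} critical configuration of $\phi(C)$. Theorem \ref{T:max2} then asserts that $\phi(\pmb s)$ is $\preceq_{\CFG}$-greatest among the stable configurations of $\phi(C)$. Now take any non-negative $\pmb b \in C$. By (c), $\phi(\pmb b)$ is stable, and by (a) it lies in $\phi(C)$, so $\phi(\pmb b) \preceq_{\CFG} \phi(\pmb s)$; applying (b) gives $\pmb s \preceq_{\CFG} \pmb b$. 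Since $\pmb s$ is itself non-negative and lies in $C$, it is the $\preceq_{\CFG}$-smallest non-negative configuration of $C$, as claimed.

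The argument is essentially a transport of structure, so I do not expect a serious computational obstacle; the step requiring the most care is property (c)—recognizing that ``stable'' means exactly ``$\preceq \pmb c_{max}$'' with no non-negativity assumption—combined with the observation that $\phi$ sends each class \emph{onto another} class, so that the extremal statement of Theorem \ref{T:max2} for $\phi(C)$ may legitimately be pulled back to $C$. I would also confirm, from the counting in Theorem \ref{T:dua} and the fact that $\phi$ permutes the classes, that each equivalence class contains exactly one superstable configuration, so that the phrase ``the superstable configuration'' in the statement is well defined.
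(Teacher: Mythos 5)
Your transport-of-structure skeleton is essentially the paper's own (your properties (a) and (b) are the paper's observations, combined with Theorem \ref{T:dua} and Theorem \ref{T:max2}), but there is a genuine gap at exactly the step you flagged as delicate: property (c) and the ensuing application of Theorem \ref{T:max2} to $\phi(\pmb b)$. When $\pmb b$ is non-negative but \emph{not} stable, $\phi(\pmb b) = \pmb c_{max} - \pmb b$ is stable but has negative entries, and you invoke Theorem \ref{T:max2} for such a configuration. The theorem's wording, read against the paper's convention that configurations may have negative coordinates, does literally cover this case, but the paper's \emph{proof} of it does not: that proof rests on Theorem \ref{T:script} and on the finiteness of the set of stable configurations in an equivalence class, and both break down once negative entries are allowed. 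Concretely, take a single non-sink vertex $v$ with two edges to the sink, so $\Delta = (2)$, $\pmb c_{max} = (1)$, $\sigma^M = (1)$. The equivalence class of $\pmb 0$ contains the infinitely many stable configurations $(-2m)$, $m \ge 0$; and the configuration $(-2)$ satisfies condition (iii) of Theorem \ref{T:script} vacuously (there is no $\tau$ with $\pmb 0 \prec \tau \prec \sigma^M$) yet is not critical, so the implication $iii) \Rightarrow i)$ of that theorem is false in the generality you need. In fact, Theorem \ref{T:max2} extended to negative-entry stable configurations is, under your own map $\phi$ and the Duality Theorem, \emph{equivalent} to the corollary you are proving; so as written, your argument derives the corollary from a statement that is just as unestablished as the corollary itself.

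The missing idea is a preliminary stabilization step, which is precisely the paper's first observation. For any non-negative $\pmb b$, the stabilization satisfies $\pmb b \succeq_{\CFG} \pmb b^\circ$, since $\pmb b^\circ = \pmb b - \tau\Delta$ for a non-negative script $\tau$ and hence $\pmb b\Delta^{-1} - \pmb b^\circ\Delta^{-1} = \tau \succeq \pmb 0$. This reduces the claim to comparing $\pmb s$ only with configurations that are non-negative \emph{and} stable. On that set, $\phi$ is an involution (if $\pmb 0 \preceq \pmb b \preceq \pmb c_{max}$ then $\pmb 0 \preceq \phi(\pmb b) \preceq \pmb c_{max}$), it still reverses $\preceq_{\CFG}$ and permutes equivalence classes, and Theorem \ref{T:max2} is then applied only to non-negative stable configurations, where its proof is actually valid. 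With that one extra reduction inserted before your complementation argument, your proof becomes the paper's proof.
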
 

\begin{proof} 
  The result follows Theorem \ref{T:max2}, Theorem \ref{T:dua}, and the following
  observations:
  \begin{enumerate}[label=\roman*)]
    \item Since $\pmb a \succeq_{\CFG} \pmb a^\circ$ for all non-negative
      configuration $\pmb a$, it suffices to show that superstable
      configurations are the smallest among the stable configurations of each
      equivalence class.
    \item If $\pmb a$ and $\pmb b$ are equivalent stable configurations, then
      $\pmb c_{max} - \pmb a$ and $\pmb c_{max} - \pmb b$ are also equivalent
      stable configurations.
    \item For any configurations $\pmb a$ and $\pmb b$, $\pmb a
      \succeq_{\CFG} \pmb b$ if and only if $\pmb c_{max} - \pmb a
      \preceq_{\CFG} \pmb c_{max} - \pmb b$.
  \end{enumerate}
\end{proof}

\medskip 

\begin{figure}[hbt]
  \begin{center}
    \includegraphics[scale=0.7]{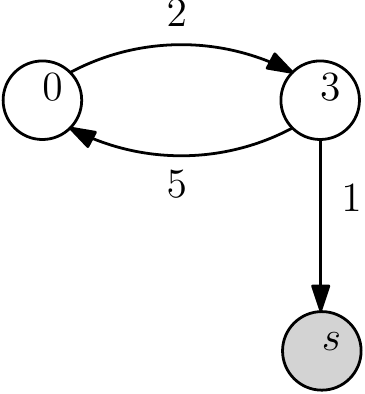} 
    \caption{A non-superstable
    configuration which is non-negative when firing any subsets of vertices.}
    \label{F:Fig-sta-3} 
  \end{center} 
\end{figure} 

We conclude with a conjecture on the $\preceq_{\CFG}$ order. 
Let $\pmb a_*$ be a superstable configuration and let $\pmb a^*$ be the critical
configuration equivalent to $\pmb a_*$. Consider the following increasing
sequence of stable configurations with respect to the $\preceq_{CFG}$ order:
\begin{equation}
  \pmb a_* \preceq_{CFG} (\pmb a_* + \sigma^M \Delta)^\circ \preceq_{CFG} (\pmb
  a_* + \sigma^M \Delta)^\circ \preceq_{\CFG} \cdots \preceq_{\CFG} \pmb a^*
  \label{eq:linseq}
\end{equation}

Consider the graph $G$ shown in Figure \ref{F:Fig-crit-2} with $\pmb a_* =
(1, 0, 0, 1))$ and $\pmb a^* = (3, 1, 1, 0)$. For this graph:
\[
  \Delta = \begin{bmatrix}
    5 & -3 & 0 & -1 \\
    -1 & 2 & -1 & 0 \\
    0 & -1 & 2 & -1 \\
    0 & 0 & 0 & 2
  \end{bmatrix}\,, \sigma^M = (1, 2, 1, 1)\,,
\]
and we have the sequence:
\[
  (1, 0, 0, 1) \prec_{\CFG} (0, 1, 1, 0) \prec_{\CFG} (4, 0, 0, 1)
  \prec_{\CFG} (3, 1, 1, 0)\,.
\]
The above sequence contains all the stable configurations equivalent to
$(1, 0, 0, 1)$. In fact, we have not found an example in which the sequence
(\ref{eq:linseq}) does not cover all the stable configurations of the
corresponding equivalence class. We conjecture that this is true for digraphs
with a global sink in general.

\begin{figure}[hbt]
  \begin{center}
    \includegraphics[scale=0.7]{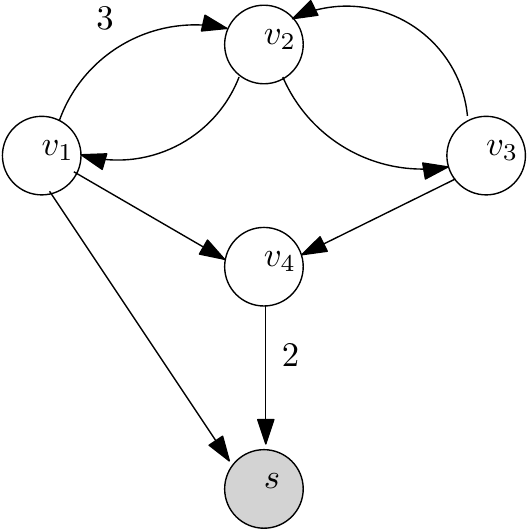} 
    \caption{Example for the Conjecture} 
    \label{F:Fig-crit-2} 
  \end{center}
\end{figure}

\begin{conjecture} 
  The $\preceq_{CFG}$ order is linear on the set of all stable configurations of
  each equivalence class.
\end{conjecture}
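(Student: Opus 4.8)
The plan is to fix an equivalence class and reformulate the statement as a property of scripts. By Corollary \ref{C:sma} and Theorem \ref{T:max2}, the class has a $\preceq_{\CFG}$-smallest element, its superstable configuration $\pmb a_*$, and a $\preceq_{\CFG}$-greatest element, its critical configuration $\pmb a^*$. Every non-negative stable configuration $\pmb x$ of the class can be written uniquely as $\pmb x=\pmb a_*+\gamma\Delta$ with $\pmb 0\preceq\gamma$ (Lemma \ref{L:evectorproperty}), and $\pmb x\preceq_{\CFG}\pmb y$ holds exactly when the corresponding scripts satisfy $\gamma_x\preceq\gamma_y$. Thus the assertion is that the set of admissible scripts is totally ordered by containment, and I would argue by contradiction: suppose $\pmb a,\pmb b$ are incomparable non-negative stable configurations and write $\pmb a-\pmb b=(\sigma^+-\sigma^-)\Delta$, where $\sigma^+,\sigma^-$ are the disjointly supported, non-zero positive and negative parts of $(\pmb a-\pmb b)\Delta^{-1}$.

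The first key step is a \emph{join lemma}: the configuration $\pmb d:=\pmb a+\sigma^-\Delta=\pmb b+\sigma^+\Delta$, which represents the coordinatewise maximum $\gamma_a\vee\gamma_b$ and hence the $\preceq_{\CFG}$-join of $\pmb a,\pmb b$, is stable. This is proved by a coordinatewise check: for each vertex $i$ one of $\sigma^+_i,\sigma^-_i$ vanishes, so choosing the representation of $\pmb d$ in which $i$ is not reverse-fired, the diagonal term drops and the increment to coordinate $i$ is a sum of the non-positive off-diagonal entries $\Delta_{ji}=-e_{j,i}$; hence $\pmb d_i\le\max(a_i,b_i)<\Delta_{ii}$ and no vertex is active. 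Applying the order-reversing involution $\phi(\pmb x)=\pmb c_{max}-\pmb x$ used in the proof of Corollary \ref{C:sma}, which preserves non-negative stable configurations and exchanges joins and meets, I would obtain dually that the meet $\pmb e=\pmb a\wedge\pmb b$ is stable as well. Consequently the stable configurations of the class form a lattice under $\preceq_{\CFG}$ with bottom $\pmb a_*$ and top $\pmb a^*$, and $\pmb e\prec_{\CFG}\pmb a,\pmb b\prec_{\CFG}\pmb d$.

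It remains to rule out such a ``diamond'', i.e.\ to promote the lattice to a chain, and I would do this by proving that every non-critical stable configuration has a \emph{unique} upper cover. If $\pmb x+\tau_1\Delta$ and $\pmb x+\tau_2\Delta$ were two distinct covers of $\pmb x$ (with $\tau_1,\tau_2\succ\pmb 0$ minimal), the meet lemma would force $\pmb x+(\tau_1\wedge\tau_2)\Delta$ to be stable and strictly $\preceq_{\CFG}$-between $\pmb x$ and the covers unless $\tau_1\wedge\tau_2=\pmb 0$, so distinct covers must have disjoint supports. To exclude two disjointly supported covers I would bring in the minimum $G$-strongly positive script $\sigma^M$ and the stabilizing map $T(\pmb x)=(\pmb x+\sigma^M\Delta)^\circ$ of the sequence (\ref{eq:linseq}): by Proposition \ref{P:cri} its only fixed point is $\pmb a^*$, and by Lemma \ref{L:prec} it moves strictly up in $\preceq_{\CFG}$ until it reaches $\pmb a^*$. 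The goal would be to show that this monotone ``flow'' visits every stable configuration of the class, so that the $T$-orbit of $\pmb a_*$ contains every stable configuration and is therefore the unique maximal chain; a natural tool here is a monotonicity statement for stabilization, namely that $\pmb x\preceq_{\CFG}\pmb y$ implies $T(\pmb x)\preceq_{\CFG}T(\pmb y)$.

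The hard part will be precisely this last step. Establishing uniqueness of covers — equivalently, that no configuration admits two disjointly supported covers, or that the $T$-orbit of $\pmb a_*$ is all of the lattice — is exactly the obstruction that keeps the statement a conjecture: the join and meet lemmas give a lattice, but a finite distributive lattice need not be a chain, and none of the monotone invariants available so far distinguishes a chain from a genuine diamond (the weight $w$ is only modular on this lattice, satisfying $w(\pmb d)+w(\pmb e)=w(\pmb a)+w(\pmb b)$, and the energy vector merely re-encodes $\preceq_{\CFG}$). A further technical subtlety to be handled throughout is that the lattice operations need not preserve non-negativity; one must therefore verify that the configurations $\pmb d,\pmb e$ and the covers produced along the way, which a priori only lie in the interval $[\pmb a_*,\pmb a^*]$, are genuine non-negative stable configurations of the class.
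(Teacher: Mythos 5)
You are attempting to prove a statement that the paper itself leaves open: it is stated as a conjecture, supported only by the example of Figure \ref{F:Fig-crit-2} and the remark that the authors ``have not found an example'' where the sequence (\ref{eq:linseq}) misses a stable configuration. So there is no proof in the paper to compare against, and the only question is whether your argument settles the problem. It does not, and you acknowledge this yourself: the decisive step --- that every non-critical stable configuration has a unique upper cover, or equivalently that the orbit of $\pmb a_*$ under $T(\pmb x)=(\pmb x+\sigma^M\Delta)^\circ$ exhausts all stable configurations of the class --- is precisely the content of the conjecture, and your proposal leaves it unproved. As you note, a finite distributive lattice need not be a chain, so the join/meet structure, even if fully established, yields no contradiction with the existence of an incomparable pair; monotonicity of $T$ (even if proved) would only give one maximal chain through $\pmb a_*$, not that this chain is everything.

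Beyond the admitted main gap, the lattice framework itself has a hole. Your coordinatewise computation does prove that the join $\pmb d=\pmb a+\sigma^-\Delta$ is stable, and the symmetric computation proves that the meet $\pmb e=\pmb a-\sigma^+\Delta$ is non-negative. But the duality step is incorrect as stated: the involution $\phi(\pmb x)=\pmb c_{max}-\pmb x$ exchanges the two constraints ``stable'' ($x_i\le d^+_i-1$) and ``non-negative'' ($x_i\ge 0$). Consequently, applying the join lemma to $\phi(\pmb a),\phi(\pmb b)$ and pulling back through $\phi$ gives only that $\pmb e$ is non-negative (which you already have directly), not that $\pmb e$ is stable; dually, non-negativity of $\pmb d$ is equivalent to stability of the meet of $\phi(\pmb a),\phi(\pmb b)$, and neither of these two facts is established by any argument in your proposal. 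This is not a cosmetic issue: your reduction to disjointly supported covers uses the meet lemma to assert that $\pmb x+(\tau_1\wedge\tau_2)\Delta$ is a stable configuration of the class, which is exactly the unproved half. So the proposal consists of two correct elementary observations (join is stable, meet is non-negative), an unjustified claim that closes the lattice (meet is stable, join is non-negative), and an honestly flagged open core --- the conjecture itself.
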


\section*{Acknowledgment}\label{sec:ack}
\addcontentsline{toc}{section}{Acknowledgements}
The manuscript was conceptualized when the authors
visited the Vietnam Institute for Advanced Study in Mathematics. We would like
to thank Prof. Robert Cori and Prof. Spencer Backman for their very useful
comments and suggestions on a  short version of this manuscript \cite{Tra16}.
The work is  partially funded by the PSSG no.6 and the NAFOSTED grant under the
project number 101.99-2016.16. 

\bibliographystyle{plain}

\addcontentsline{toc}{section}{References}

\end{document}